\newcommand{\href}[1]{#1}
\pgfplotsset{compat=1.12}
\newenvironment{customtheorem}[1]
  {\innercustomtheorem}
  {\endinnercustomtheorem}
\newtheorem{thm}{Theorem}[section]
\newtheorem{cor}[thm]{Corollary}
\newtheorem{lemma}[thm]{Lemma}
\newtheorem{prop}[thm]{Proposition}
\theoremstyle{definition}
\newtheorem{defn}[thm]{Definition}
\newtheorem{exam}[thm]{Example}
\newtheorem{notation}[thm]{Notation}
\newtheorem*{ack}{Acknowledgments}
\newtheorem{remark}[thm]{Remark}
\numberwithin{equation}{section}
\DeclareMathOperator{\tr}{Tr}
\DeclareMathOperator{\diag}{diag}
\newcommand{\bb}[1]{\mathbb{#1}}
\newcommand{\bh}{\mathcal{B}(\mathcal{H})}
\newcommand{\cl}[1]{\mathcal{#1}}
\newcommand{\ff}[1]{\mathfrak{#1}}
\newcommand{\inner}[2]{\left\langle {#1},{#2} \right\rangle}
\begin{document}

\title[Universally Symmetric Norming Operators are Compact]{Universally Symmetric Norming Operators are Compact}

\author[Satish~K.~Pandey]{Satish K.~Pandey}
\address{Faculty of Mathematics\\
Technion - Israel Institute of Technology\\
Haifa\; 3200003\\
Israel}
\email{satishpandey@campus.technion.ac.il}
\urladdr{\href{http://noncommutative.space/}
{\url{http://noncommutative.space/}}}
\date{May 21, 2020}
\keywords{Hilbert space; symmetrically-normed ideals; compact operators; symmetric norms; symmetric norming operators}
\subjclass[2010]{Primary 47B07, 47B10, 47L20, 47A10, 47A75; Secondary 47A05, 47L07, 47L25, 47B65, 46L05}

\begin{abstract} 
We study a specific family of symmetric norms on the algebra 
$\bh$ of operators on a separable infinite-dimensional Hilbert space. With respect to each symmetric norm in this family the identity operator fails to attain its norm. Using this, we generalize one of the main results from \cite{SP}; the hypothesis is relaxed, and consequently, the family of symmetric norms for which the result holds is extended.

We introduce and study the concepts of ``universally symmetric norming operators'' and ``universally absolutely symmetric norming operators'' on a separable Hilbert space. These refer to the operators that are, respectively, norming and absolutely norming, with respect to every symmetric norm on $\cl B(\cl H)$. We establish a characterization theorem for such operators and prove that these classes are identical, and that they coincide with the class of compact operators. In particular, we provide an alternative characterization of compact operators on a separable infinite-dimensional Hilbert space.
\end{abstract}

\maketitle

\tableofcontents

\section{Introduction} 

Throughout this paper we shall consider Hilbert spaces over the field $\mathbb{C}$ of complex numbers. 
A bounded linear transformation, henceforth called an ``operator'', $T:\mathcal{H}\to\mathcal{K}$ between two Hilbert spaces is said to be \emph{norming} or \emph{norm attaining} if there is an element $x\in\mathcal{H}$ with $\|x\|=1$ such that $\|T\|=\|Tx\|$, where $\|T\| = \text{sup}\{\|Tx\|_{\cl{K}}: x\in \cl{H}, \|x\|_{\cl{H}} \leqslant 1\}$ is the usual operator norm on the Banach space $\cl B(\cl H, \cl K)$ of operators from $\cl H$ to $\cl K$. We say that $T\in \mathcal{B}(\mathcal{H},\mathcal{K})$ is \emph{absolutely norming} if for every nontrivial closed subspace $\cl M$ of $\cl H$, $T|_\cl M$ is norming. We let $\mathcal N(\mathcal{H},\mathcal{K})$ and $\mathcal{AN}(\mathcal{H},\mathcal{K})$ respectively denote the sets of norming and absolutely norming operators in $\mathcal{B}(\mathcal{H},\mathcal{K})$. Throughout this exposition, the term ``ideal'' will always mean a two-sided ideal. Let us recall the following definition. 
\begin{defn}\label{defn:symmetric-norm}
Let $\ff{I}$ be an ideal of the algebra $\cl{B}(\cl{H})$ of operators on a separable infinite-dimensional Hilbert space $\cl H$. A \textit{symmetric norm} on $\ff{I}$ is defined to be a function $\|\cdot\|_s:\ff{I}\to [0,\infty)$ which satisfies the following conditions:
\begin{enumerate}
\item  $\|\cdot\|_s$ is a norm;
\item $\|X\|_s=\|X\|$ for every rank-one operator $X\in \ff{I}$ (crossnorm property); and
\item $\|AXB\|_s\leq\|A\|\|X\|_s\|B\|$ for every $X\in \ff{I}$ and for every pair $A,B$ of operators in $\cl{B}(\cl{H})$ (uniformity);
\end{enumerate}
where $\|\cdot\|$ is the usual operator norm. 
\end{defn}
If we consider the ideal $\ff I$ to be the full algebra $\cl B(\cl H)$ itself, then $\|\cdot\|_s$ is said to be a symmetric norm on $\cl B(\cl H)$. Moreover, the usual operator norm on any ideal $\ff{I}$ of $\cl{B}(\cl{H})$ is a symmetric norm, and every symmetric norm on $\cl B(\cl H)$ is topologically equivalent to the usual operator norm. 

\subsection{Background and motivation}
The class of norming operators on Hilbert spaces have been extensively studied and there is a plethora of information on these operators; see, for instance, \cite{AcRu02, AcRu98, AcAgPa, Acosta, Aguirre, Partington, Scha2, Scha, Shkarin} and references therein. 
The class of absolutely norming operators, however, was introduced recently in \cite{CN}. In \cite{VpSp}, we established the following spectral characterization theorem for these operators.

\begin{thm}\cite[Theorem 5.1]{VpSp}
Let $T\in \cl B(\cl H, \cl K)$ and $T=U|T|$ its polar decomposition. Then $T$ is absolutely norming, that is, $T\in \cl{AN}(\cl H, \cl K)$ if and only if $|T|$ is of the form $|T|=\alpha I + K + F$, where $\alpha \geq 0$, $K$ is a positive compact operator, and $F$ is a self-adjoint finite-rank operator.
\end{thm}

The above result served to be the first hint to a more general situation. Suppose $\cl B(\cl H, \cl K)$ is equipped with a symmetric norm\footnote{By abuse of terminology, we continue to use the term ``symmetric'' to refer to those norms on $\cl B(\cl H, \cl K)$ which are symmetric in the sense of Definition \ref{defn:symmetric-norm} whenever $\cl H$ is separable and $\cl K=\cl H$.} equivalent to the usual operator norm. Then what does it mean for an operator $T\in \cl B(\cl H, \cl K)$ to be norming and absolutely norming in this setting? What about characterizing these operators?  

In \cite{SP}, we continued the study of absolutely norming operators in this more general setting where $\cl B(\cl H, \cl K)$ was equipped with one of the following three (families of) symmetric norms: the 
Ky Fan $k$-norm(s), the weighted Ky Fan $\pi, k$-norm(s), and 
the $(p,k)$-singular norm(s). We defined the notion of absolutely norming operators in each of these cases and thereafter characterized the set of these operators with respect to each of these three (families of) norms; see \cite[Theorems 4.22, 5.12 and 5.13]{SP}. 
This detailed study of several particular symmetric norms provided us with the insight of \emph{how} to extend the concept of ``norming'' and ``absolutely norming'' from a specific symmetric norm to an arbitrary symmetric norm that is equivalent to the usual operator norm; earlier definitions used the intrinsic nature of each symmetric norm in question. (See Definition \ref{Phi-and-abs-phi-Norming}; the attention has been restricted to the algebra $\cl B(\cl H)$ of operators on a separable infinite-dimensional Hilbert space $\cl H$, and the theory of symmetrically-normed ideals and their Banach space duals play a crucial part in arriving at this definition.)

The subsequent discussion in \cite{SP} involves positive operators of the form of a  nonnegative scalar multiple of the identity plus a positive compact plus a self-adjoint finite-rank. It is not clear, \emph{a priori}, if the operators of this form are absolutely norming with respect to \emph{every} symmetric norm on $\cl B(\cl H)$. It turns out that there exists a symmetric norm on $\cl B(\ell^2)$ with respect to which the identity operator does not attain its norm;
see Theorem \ref{Identity-nonnorming} \cite[Proposition $1.3$]{SP}.

\subsection{Overview of this paper}
The proof of the result presented in \cite{SP} --- that there exists a symmetric norm on $\cl B(\ell^2)$ such that the identity operator does not attain its norm --- in fact, presents a constructive method to produce, not merely one, but an infinite family of symmetric norms on $\cl B(\ell^2)$ with respect to each of which the identity operator does not attain its norm. 

In this manuscript, we formally introduce this family of symmetric norms and study them in detail (see Section \ref{sec:norms-that-are-not-attained-by-the-identity}).
This leads us to Theorem 
\ref{Identity-nonnorming-for-all-nonconstantsequences} which
generalizes the above mentioned result from \cite{SP}; the hypothesis is relaxed, and consequently, the family of symmetric norms for which the result holds is extended.  

We then introduce and study the notion of ``universally symmetric norming operators'' or USN operators
(see Definition \ref{Universally-symmetric-Norming})
and ``universally absolutely symmetric norming operators'' or
UASN operators (see Definition \ref{Universally-absolutely-symmetric-Norming}) 
on a separable infinite-dimensional Hilbert space. 
These refer to the operators that are, respectively, norming and absolutely norming, with respect to \emph{every} symmetric norm.  
It is known (see Theorem \ref{compacts-are-absolutely-symmetric-norming}) that a compact operator in $\cl B(\cl H)$ is universally absolutely symmetric norming, and hence universally symmetric norming. This renders compacts as prototypical examples of such operators. So, we have 
$$
\text{compact operators} \subseteq \text{UASN operators} \subseteq \text{USN operators}.
$$
It would be desirable to know whether a USN operator is compact. In Section \ref{Characterization-Universally-Symmetrically-Norming} we answer this question affirmatively. The following is the main result of this section which essentially states that an operator in $\cl B(\cl H)$ is universally symmetric norming if and only if it is compact.

\begin{customtheorem}{\ref{complete-characterization-arbitrary}}
Let $T\in \cl B(\cl{H})$ and let $\Phi_1$ denote the maximal s.n.function. Then the following statements are equivalent.
\begin{enumerate}
\item $T\in \cl B_0(\cl H)$.
\item $T$ is universally absolutely symmetric norming, that is,
 $T\in\cl{AN}_{\Phi^*}(\cl H)$ for every s.n.function $\Phi$ equivalent to $\Phi_1$.
\item $T$ is universally symmetric norming, that is, $T\in\cl{N}_{\Phi^*}(\cl H)$ for every s.n.function $\Phi$ equivalent to $\Phi_1$.
\end{enumerate}
\end{customtheorem}

 We hence establish a characterization theorem for such operators on $\cl B(\cl H)$. In particular, this result provides an alternative characterization theorem for compact operators on a separable Hilbert space.


\begin{ack}
The author is grateful to his advisor, Vern I. Paulsen, for his suggestions and feedback during the writing of this paper. He would also like to thank Laurent W. Marcoux and Sanat Upadhyay for fruitful conversations about the content of this paper.  
\end{ack}

\section{Preliminaries}\label{Preliminaries}
In this section we recall some notions and results concerning the ideal structure of the algebra of all bounded linear operators acting on a separable Hilbert space. Also, since this work is a continuation of \cite{SP}, this section essentially builds upon, and hence requires, the preliminary section of \cite{SP} where the concept of singular values is extended from compact operators to \emph{any} operator; we refer the reader to \cite[Section $2$]{SP} and \cite{GK}.

\begin{notation}
Consider the algebra $\cl B(\cl H)$ of operators on a separable Hilbert space $\cl H$.
We let $\cl{B}_{00}(\cl{H})$, respectively, $\cl B_0(\cl{H})$ denote the set of all finite-rank operators on $\cl{H}$, respectively, the set of compacts. We use $\cl B_1(\cl{H})$ to denote the trace class operators, with the trace norm $\|\cdot\|_1$.
By $c_0$ we denote the space of all convergent sequences of real numbers with limit $0$ and we let $c_{00}\subseteq c_0$ be the linear subspace of  $c_0$ consisting of all sequences with a finite number of nonzero terms.
 The positive cone of $c_{00}$ is denoted by $c_{00}^+$ and we use $c_{00}^*\subseteq c_{00}^+$ to denote the cone of all nonincreasing nonnegative sequences from $c_{00}$.
\end{notation}

\begin{defn}
An ideal $\ff S$ of the algebra $\cl B(\cl H)$ is said
to be a \emph{symmetrically-normed ideal} (abbreviated an \emph{s.n.ideal}) of $\cl B(\cl H)$ if on it there is
defined a symmetric norm $\|\cdot\|_{\cl{\ff S}}$ which makes $\ff S$ a Banach space (that is, $\ff S$ is complete in the
metric given by this norm). 
We say that two ideals $\ff S_I$ and $\ff S_{II}$ \textit{coincide elementwise} if $\ff S_I$ and $\ff S_{II}$ consist of the same elements.
\end{defn}

\begin{defn}\cite[Chapter 3, Page $71$]{GK}
A function $\Phi:c_{00}\to [0,\infty)$ is said to be \emph{symmetric norming function} (or simply an \emph{s.n.function}) if it satisfies the following properties:
 \begin{enumerate}
 \item[(i)]$\Phi(\xi)\geq 0$ for every $\xi:=(\xi_j)_{j\in \bb N}\in c_{00}$;
\item[(ii)]$\Phi(\xi)= 0 \iff \xi=0$; 
\item[(iii)]$\Phi(\alpha \xi)=|\alpha| \Phi(\xi)$ for every $\xi\in c_{00}$ and for every scalar $\alpha\in \bb R$; and
\item[(iv)]$\Phi(\xi+\psi)\leq \Phi(\xi)+\Phi(\psi)$ for every pair $\xi,\psi$ of sequences in $c_{00}.$
\item[(v)]$\Phi((\xi_1,\xi_2,...,\xi_n,0,0,...))=\Phi((|\xi_{j_1}|,|\xi_{j_2}|,...,|\xi_{j_n}|,0,0,...))$ for every $\xi\in c_{00}$ and for every $n\in \bb N$, where $j_1,j_2,...,j_n$ is any permutation of the integers $1,2,...,n$.
\item[(vi)] $\Phi((1,0,0,...))=1.$
\end{enumerate}
\end{defn}

\begin{defn}\cite[Chapter 3, Page $76$]{GK}
Let $\Phi$ and $\Psi$ be two s.n.functions. $\Phi$ and $\Psi$ are said to be \emph{equivalent} if 
\[
\sup_{\xi\in c_{00}}\frac{\Phi(\xi)}{\Psi(\xi)}<\infty \, \, \text{ and }\, \, \sup_{\xi\in c_{00}} \frac{\Psi(\xi)}{\Phi(\xi)}<\infty.
\]
We say that $\Phi\leq \Psi$ if for every every $\xi \in c_{00}$, we have $\Phi(\xi)\leq \Psi(\xi)$.
\end{defn}
\begin{remark}
A moment's thought will convince the readers that an s.n.function can be uniquely defined by its values on the cone $c_{00}^*$.
Here the \emph{minimal} s.n.function $\Phi_\infty:c_{00}^*\to [0,\infty)$ is defined by $
\Phi_\infty(\xi)=\xi_1 \text{ for every } \xi=(\xi_j)_j\in c_{00}^*$
and the \emph{maximal} s.n.function $\Phi_1:c_{00}^*\to [0,\infty)$ is defined by $
\Phi_1(\xi)=\sum_{j}\xi_j \text{ for every } \xi=(\xi_j)_j\in c_{00}^*.$ For any s.n.function $\Phi$, we have $\Phi_\infty\leq \Phi \leq \Phi_1$  (see \cite[Chapter 3, Section 3, Relation $3.12$, Page $76$]{GK}).
\end{remark}
 
\begin{defn}
Let $\Phi$ be an s.n.function defined on $c_{00}^*$. Then the function given by the formula  
$$
\Phi^*(\eta)=\max \left\{\sum_{j}\eta_j\xi_j: \xi\in c_{00}^*, \Phi(\xi)=1 \right\} \text{ for every }\eta \in c_{00}^*,
$$ 
is defined to be the \emph{adjoint of the function} $\Phi$.
\end{defn}
\begin{remark}
That $\Phi^*$ is an s.n.function is a trivial observation. Also, the adjoint of $\Phi^*$ is $\Phi$. In particular, the minimal and maximal s.n.functions are the adjoint of each other, that is, 
$\Phi_1^*=\Phi_{\infty}$ and $\Phi_\infty^*=\Phi_1.$ Therefore, when an s.n.function is equivalent to the maximal (minimal) one, its adjoint is equivalent to the minimal (maximal) one.
\end{remark}

\begin{remark}
It is evident that every s.n.ideal gives rise to an s.n.function. Conversely, to every s.n.function $\Phi$ we associate an s.n.ideal $\ff S_{\Phi}$, which is referred to as the s.n.ideal generated by the s.n.function $\Phi$. For a detailed exposition of the construction of the s.n.ideal from an s.n.function we refer the reader to Gohberg and Krein's text \cite[Chapter $3$]{GK} which elaborately discusses the theory of s.n.ideals. An abridged outline of this construction has also been discussed in \cite[Section $6$, Notation $6.4$]{SP}. Since we will be dealing with this theory extensively, we have attempted to duplicate their notation wherever possible.

  If $\Phi,\Psi$ are s.n.functions and $\ff S_\Phi,\ff S_\Psi$ are the s.n.ideals generated by these s.n.functions respectively, then $\ff S_\Phi$ and $\ff S_\Psi$ coincide elementwise if and only if $\Phi$ and $\Psi$ are equivalent. In particular, if $\Phi$ is an s.n.function equivalent to  $\Phi_1$, then  $\ff S_{\Phi}$ and $\cl B_1(\cl H)$ coincide elementwise and when $\Phi$ is equivalent to $\Phi_\infty$, $\ff S_{\Phi}$ and $\cl B_0(\cl H)$ coincide elementwise. 
Two more notations are in order. The Banach space dual of a Banach space $X$ is denoted by $X^*$ in the sequel. To indicate that Banach spaces $X$ and $Y$ are
isometrically isomorphic, we write $X\cong Y$ or $Y\cong X$ isometrically.
\end{remark}
 
We conclude this section by an often needed piece of folklore from \cite{GK}.
\begin{prop}\cite[Chapter 3, Theorem 12.4]{GK}\label{Gohb-Krein}
If $\Phi$ is an arbitrary s.n.function equivalent to the maximal s.n.function, then the general form of a continuous linear functional $f$ on the separable space $\ff S_\Phi$ is given by $f(X)=\tr(AX)$ for some $A\in \cl B(\cl H)$ and 
 $$\|f\|:=\sup\{|\tr(AX)|:X\in \ff S_\Phi, \|X\|_\Phi\leq 1\}=\|A\|_{\Phi^*}.$$
Thus, the Banach space dual $\ff S_\Phi^*$ is isometrically isomorphic to $(\cl B(\cl H),\|\cdot\|_{\Phi^*})$, that is, 
$$\ff S_\Phi^*\cong (\cl B(\cl H),\|\cdot\|_{\Phi^*}).$$ \end{prop}

\section{Norms that are not attained by the identity}\label{sec:norms-that-are-not-attained-by-the-identity}

In this section, we revisit a result from \cite{SP} and use it to formally introduce a family of symmetric norms on $\cl B(\cl H)$ with respect to each of which the identity operator is rendered nonnorming. Thereafter, in Subsection \ref{generalization-of-a-result-from-previous-paper}, we 
improve upon this above mentioned result by extending 
it to a larger family of symmetric norms. 

\subsection{Symmetric norming and absolutely symmetric norming operators}
Given an arbitrary s.n.function $\Phi$ that is equivalent to the maximal s.n.function, we now recall the definition of operators in $\cl B(\cl H)$ that attain their $\Phi^*$-norm.

\begin{defn}\cite[Definitions $6.11, 6.14$]{SP}\label{Phi-and-abs-phi-Norming}
Let $\Phi$ be an s.n.function equivalent to the maximal s.n.function. An operator $T\in (\cl B(\cl H),\|\cdot\|_{\Phi^*})$ is said to be \emph{$\Phi^*$-norming} or \emph{symmetric norming with respect to the symmetric norm $\|\cdot\|_{\Phi^*}$}  if there exists an operator $K\in \ff S_\Phi=\cl B_1(\cl H)$ with $\|K\|_{\Phi}=1$ such that $|\tr(TK)|=\|T\|_{\Phi^*}.$ We say that $T\in (\cl B(\cl H),\|\cdot\|_{\Phi^*})$ is \emph{absolutely $\Phi^*$-norming} or \emph{absolutely symmetric norming with respect to the symmetric norm $\|\cdot\|_{\Phi^*}$} if for every nontrivial closed subspace $\cl M$ of $\cl H$, $TP_{\cl M}\in \cl B(\cl H)$ is $\Phi^*$-norming (here $P_\cl M$ is the orthogonal projection onto $\cl M$).

We let $\cl N_{\Phi^*}(\cl H)$ and $\cl{AN}_{\Phi^*}(\cl H)$ respectively denote the set of $\Phi^*$-norming and absolutely $\Phi^*$-norming operators in $\cl B(\cl H)$. Needless to mention, every absolutely $\Phi^*$-norming operator is $\Phi^*$-norming, that is, $\cl{AN}_{\Phi^*}(\cl H)\subseteq \cl{N}_{\Phi^*}(\cl H)$.
\end{defn}

With the above definitions established to guide the way, we prove and collect certain fundamental results concerning symmetric norming and absolutely symmetric norming operators in $\bh$. We begin by recalling the following result. 

\begin{thm}\cite[Theorem $6.17$]{SP}\label{compacts-are-absolutely-symmetric-norming}
Let $\Phi$ be an arbitrary s.n.function equivalent to the maximal s.n.function. If $T$ is a compact operator, then $T\in \cl{AN}_{\Phi^*}(\cl H),$ that is, $\cl B_0(\cl H)\subseteq \cl{AN}_{\Phi^*}(\cl H).$
\end{thm}
Following the well established precedent, we use $s_j(T)$ to denote the $j$-th singular value (or singular number or s-number) of $T\in \cl B(\cl H)$.
The following proposition allows us to concentrate on the positive operators that are symmetric norming. The technique of the proof might be elementary but since we do not have a reference for the exact statement, a complete proof is provided.
\begin{prop}\label{T-iff-|T|-phi-star}
Let $\Phi$ be an s.n.function equivalent to the maximal s.n.function. Then $T\in \cl N_{\Phi^*}(\cl H)$ 
if and only if $|T|\in \cl N_{\Phi^*}(\cl H).$
\end{prop}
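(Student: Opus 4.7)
The plan is to leverage the polar decomposition $T = U|T|$, where $U$ is the partial isometry with initial space $\overline{\mathrm{ran}\,|T|}$, together with the cyclic property of the trace and the symmetric-norm inequality $\|AXB\|_\Phi \le \|A\|\,\|X\|_\Phi\,\|B\|$ guaranteed by condition (2) in the definition of a symmetric norm. A useful preliminary is that $\|T\|_{\Phi^*} = \||T|\|_{\Phi^*}$, which holds because $\Phi^*$ is a symmetric norming function and the singular value sequences of $T$ and $|T|$ coincide.

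For the forward direction, suppose $T\in \cl N_{\Phi^*}(\cl H)$; the case $T=0$ is trivial, so assume $T\ne 0$. Choose $K \in \ff S_\Phi = \cl B_1(\cl H)$ with $\|K\|_\Phi = 1$ and $|\tr(TK)| = \|T\|_{\Phi^*}$. I would set $K' := KU \in \cl B_1(\cl H)$ (trace class being an ideal) and compute, using cyclicity,
\[
\tr(TK) = \tr(U|T|K) = \tr(|T|KU) = \tr(|T|K').
\]
Property (2) of symmetric norms yields $\|K'\|_\Phi \le \|K\|_\Phi\cdot \|U\| \le 1$. The duality bound from Proposition \ref{Gohb-Krein}(2) then gives $|\tr(|T|K')| \le \||T|\|_{\Phi^*}\|K'\|_\Phi$; combining this with $|\tr(|T|K')| = \|T\|_{\Phi^*} = \||T|\|_{\Phi^*} \ne 0$ forces $\|K'\|_\Phi = 1$. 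Hence $|T|$ attains its $\Phi^*$-norm at $K'$.

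The reverse direction proceeds symmetrically: note that $|T| = U^*T$, since $U^*U$ is the projection onto $\overline{\mathrm{ran}\,|T|}$ and $|T|$ vanishes on the orthogonal complement of this space. Starting from $K$ that attains $|\tr(|T|K)| = \||T|\|_{\Phi^*}$, I would set $K' := KU^*$ and repeat the computation:
\[
\tr(|T|K) = \tr(U^*TK) = \tr(TKU^*) = \tr(TK'),
\]
with the same bound $\|K'\|_\Phi \le 1$ forced to equality by the duality saturation.

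The argument presents no real obstacle; the only point requiring care is verifying that $\|K'\|_\Phi$ equals $1$ rather than merely being bounded by $1$, which is automatic from the observation that $|\tr(|T|K')|$ (resp.\ $|\tr(TK')|$) already attains the maximum on the unit ball of $\ff S_\Phi$.
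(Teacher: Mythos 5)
Your proof is correct and follows essentially the same route as the paper's: the polar decomposition $T=U|T|$ (and $|T|=U^*T$), cyclicity of the trace, and transporting the norming element $K$ to $KU$ (resp.\ $KU^*$) while using property (2) of symmetric norms to bound its $\Phi$-norm by $1$. The only cosmetic difference is that you force $\|KU\|_\Phi=1$ by saturating the duality inequality $|\tr(|T|KU)|\le \||T|\|_{\Phi^*}\|KU\|_\Phi$, whereas the paper rescales $KU$ and derives a contradiction with the supremum; the two arguments are interchangeable.
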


\begin{proof}
We first assume that $T\in \cl N_{\Phi^*}(\cl H)$ and observe that $\|T\|_{\Phi^*}=\|\,|T|\,\|_{\Phi^*}$ since for each $j$, $s_j(T)=s_j(|T|)$. Then there exists $K\in \cl B_1(\cl H)$ with $\|K\|_\Phi=1$ such that $\|T\|_{\Phi^*}=|\tr(TK)|$. If $T=U|T|$ is the polar decomposition of $T$, then $$
\|\,|T|\,\|_{\Phi^*}=\|T\|_{\Phi^*}=|\tr(TK)|=|\tr(U|T|K)|=|\tr(|T|KU)|,
$$ 
where $KU\in \cl B_1(\cl H)$ with $\|KU\|_\Phi=\|IKU\|_\Phi\leq \|I\|\|K\|_\Phi\|U\|=\|K\|_\Phi=1$. In fact, $\|KU\|_\Phi=1$; for if not, then the operator $S:=KU/\|KU\|_{\Phi}\in \cl B_1(\cl H)$ satisfies $\|S\|_\Phi=1$ and yields
\begin{align*}
|\tr(|T|S)|&=\left|\tr\left(\frac{|T|KU}{\|KU\|_{\Phi}}\right)\right|=\frac{1}{\|KU\|_{\Phi}}|\tr(|T|KU)|>|\tr(|T|KU)|=\|\,|T|\,\|_{\Phi^*},
\end{align*}
which contradicts the fact that the supremum of the set $$\{|\tr(|T|X)|:X\in \cl B_1(\cl H),\|X\|_\Phi\leq 1\}$$ is attained at $KU$. This shows that $|T|\in \cl N_{\Phi^*}(\cl H).$ 

Conversely, if $|T|\in \cl N_{\Phi^*}(\cl H)$, then by replacing $T$ with $|T|$ in the above argument using $|T|=U^*T$, we can prove the existence of  $\hat K\in \cl B_1(\cl H)$ with $\|\hat K\|_\Phi=1$ such that $\|T\|_{\Phi^*}=|\tr(T\hat KU^*)|$ where $\hat KU^*\in \cl B_1(\cl H)$ with $\|\hat KU*\|_\Phi\leq1$. It can then be shown that $\|\hat KU^*\|_\Phi=1$ and the result follows.
\end{proof}

We need one more result concerning the computation of the symmetric norm of an operator.

\begin{prop}\label{PhiNorming-alternative-II}
Let $\Phi$ be an s.n.function equivalent to the maximal s.n.function and let $T\in \cl B(\cl H)$. Then 
$$
\|T\|_{\Phi^*}=\sup\left\{\sum_{j}s_j(T)s_j(K):K\in \cl B_1(\cl H), K=\text{\emph{diag}}\{s_j(K)\}_{j}, \|K\|_\Phi=1\right\}
$$
\end{prop}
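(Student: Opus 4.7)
The plan is to exploit the duality from Proposition \ref{Gohb-Krein}(2): since $\Phi$ is equivalent to $\Phi_1$, one has $\ff S_\Phi=\cl B_1(\cl H)$ elementwise and
$$\|T\|_{\Phi^*} = \sup\bigl\{|\tr(TK)|:K\in\cl B_1(\cl H),\,\|K\|_\Phi\le 1\bigr\}.$$
I will show that this supremum is unchanged when $K$ is restricted to the positive diagonal operators $K=\text{diag}\{s_j(K)\}$ of unit $\Phi$-norm and the trace pairing is replaced by $\sum_j s_j(T)s_j(K)$. Two ingredients from the Gohberg--Krein theory will do the work: (i) the symmetric norm of a trace class operator depends only on its singular value sequence, and (ii) von Neumann's trace inequality $|\tr(TK)|\le \sum_j s_j(T)s_j(K)$.

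For the direction $(\geq)$, I would fix a diagonal $K$ with $\|K\|_\Phi=1$, take a polar decomposition $T=U|T|$ with spectral representation $|T|=\sum_j s_j(T)\langle\cdot,v_j\rangle v_j$ in an orthonormal eigenbasis $\{v_j\}$ of $|T|$, and set
$$K_0 := \sum_j s_j(K)\,\langle\cdot,Uv_j\rangle\,v_j.$$
Because $\{Uv_j\}$ is orthonormal on the initial space of $U$, this is a Schmidt expansion with singular values $\{s_j(K)\}$, so $K_0\in\cl B_1(\cl H)$ with $\|K_0\|_\Phi=\|K\|_\Phi=1$ by (i). Using $Tv_j=s_j(T)Uv_j$, a direct computation gives $TK_0=\sum_j s_j(T)s_j(K)\langle\cdot,Uv_j\rangle Uv_j$, whose trace is $\sum_j s_j(T)s_j(K)$. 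The duality formula then yields $\sum_j s_j(T)s_j(K)=|\tr(TK_0)|\le\|T\|_{\Phi^*}$.

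For the reverse inequality $(\leq)$, given $K\in\cl B_1(\cl H)$ with $\|K\|_\Phi\le 1$, I would apply (ii) and then pass to the diagonal operator $\tilde K:=\text{diag}\{s_j(K)\}$ in any fixed orthonormal basis; by (i), $\|\tilde K\|_\Phi=\|K\|_\Phi\le 1$. If $\tilde K=0$ the bound is trivial; otherwise normalizing to $\hat K:=\tilde K/\|\tilde K\|_\Phi$ only enlarges each $s_j(\hat K)$, so
$$\sum_j s_j(T)s_j(\hat K)\ge \sum_j s_j(T)s_j(K)\ge |\tr(TK)|.$$
Taking the supremum over admissible $K$ bounds $\|T\|_{\Phi^*}$ by the RHS of the proposition. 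The main technical hurdle is simply citing (i) and (ii) from Gohberg--Krein in a form directly usable in the $\Phi$-framework; the rest reduces to elementary manipulations with polar and Schmidt decompositions.
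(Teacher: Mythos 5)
Your $(\le)$ direction (von Neumann's trace inequality followed by replacing $K$ with the diagonal operator carrying the same singular values and normalizing) is essentially the route the paper takes. The gap is in your $(\ge)$ direction. You write $|T|=\sum_j s_j(T)\langle\cdot,v_j\rangle v_j$ for an orthonormal eigenbasis $\{v_j\}$ of $|T|$; such an expansion exists when $T$ is compact (or at least when $|T|$ is diagonalizable with eigenvalue sequence $(s_j(T))_j$), but not for a general $T\in\cl B(\cl H)$. For instance, if $|T|$ is multiplication by $2-x$ on $L^2[0,1]$ then $s_j(T)=2$ for every $j$ and $|T|$ has no eigenvectors at all, so the witness operator $K_0$ you need cannot be built. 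This is not a removable edge case: the proposition is invoked later precisely for positive \emph{non-compact} operators (to show they fail to be $\Phi_\pi^*$-norming), so the argument must cover exactly the operators for which your construction breaks down. One could patch it with approximate eigenvectors (e.g.\ via $\sum_{j\le n}s_j(T)=\sup\{|\tr(TF)|:\operatorname{rank}F\le n,\ \|F\|\le1\}$ and an $\epsilon$-argument), but that work is not in your write-up.

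The paper avoids the issue entirely: to bound $\sum_j s_j(T)s_j(K)$ for $\|K\|_\Phi=1$ it applies the H\"older-type duality inequality for s.n.\ functions, $\sum_j\eta_j\xi_j\le\Phi^*(\eta)\,\Phi(\xi)$, with $\eta=(s_j(T))_j$ and $\xi=(s_j(K))_j$, together with the Gohberg--Krein identity $\Phi^*\bigl((s_j(T))_j\bigr)=\|T\|_{\Phi^*}$. This gives $\sum_j s_j(T)s_j(K)\le\|T\|_{\Phi^*}$ directly, with no spectral decomposition of $|T|$ and no witness operator required. If you replace your eigenbasis construction by this inequality, the rest of your argument goes through.
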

\begin{proof}
Since $\Phi$ is equivalent to the maximal s.n.function,  we know that $\ff{S}_{\Phi}^*\cong  (\cl B(\cl H), \|\cdot\|_{\Phi^*})$ isometrically
and by Definition \ref{Phi-and-abs-phi-Norming} the $\|\cdot\|_{\Phi_\pi^*}$ norm for any operator $T\in \cl B(\cl H)$ is given by 
$\|T\|_{\Phi^*}= \sup\{|\tr(TK)|:K\in \ff S_{\Phi},\, \|K\|_{\Phi}= 1\}.$ But the ideal $\cl B_1(\cl H)$ and $\ff S_{\Phi_\pi}$ coincide elementwise and hence 
$\|T\|_{\Phi^*}= \sup\{|\tr(TK)|:K\in \cl B_1(\cl H),\, \|K\|_{\Phi}= 1\}.$

First we set $\alpha:=\sup\{|\tr(TK)|:K\in \cl B_1(\cl H),\, \|K\|_{\Phi}= 1\}$ and $\beta:=\sup\{\sum_{j}s_j(T)s_j(K):K\in \cl B_1(\cl H),\, \|K\|_{\Phi}= 1\}$, and thereafter we claim that $\alpha=\beta.$
That $\alpha\leq \beta$ is a trivial observation since $|\tr(TK)|\leq \sum_{j}s_j(TK)\leq \sum_js_j(T)s_j(K).$ To see $\beta \leq \alpha$, let us choose an operator $K\in \cl B_1(\cl H)$ with $\|K\|_{\Phi}=1$. An easy computation yields 
\begin{align*}
\sum_{j}s_j(T)s_j(K)&=
\inner{\begin{bmatrix}
s_1(T)\\
\vdots\\
s_j(T)\\
\vdots
\end{bmatrix}}{\begin{bmatrix}
s_1(K)\\
\vdots\\
s_j(K)\\
\vdots
\end{bmatrix}}\\
&\leq \Phi^*\left(\begin{bmatrix}
s_1(T)\\
\vdots\\
s_j(T)\\
\vdots
\end{bmatrix}\right)\Phi\left(\begin{bmatrix}
s_1(K)\\
\vdots\\
s_j(K)\\
\vdots
\end{bmatrix}\right)\\
&=\Phi^*\left(\begin{bmatrix}
s_1(T)\\
\vdots\\
s_j(T)\\
\vdots
\end{bmatrix}\right)=\|T\|_{\Phi^*}\\
&=\sup\{|\tr(TK)|:K\in \cl B_1(\cl H),\, \|K\|_{\Phi}= 1\}=\alpha.
\end{align*}
It then follows that $\beta \leq \alpha$ and this proves our first claim.

We next let $\gamma:=\sup\{\sum_{j}s_j(T)s_j(K):K\in \cl B_1(\cl H),\,K=\diag\{s_j(K)\}, \,\|K\|_{\Phi}$ $= 1\}$ and prove that $\gamma=\beta$. That $\gamma \leq \beta$ is obvious. To prove $\beta\leq \gamma$, we choose an operator $K\in \cl B_1(\cl H)$ with $\|K\|_{\Phi}=1$ and define $$
\tilde K:=\begin{pmatrix}
s_1(K)\\
&s_2(K)&&\text{\huge 0}\\
&&\ddots&&\\
&\text{\huge 0}& & s_j(K)&\\
&&&&\ddots
\end{pmatrix}.
$$
Notice that for every $j$, we have $s_j(\tilde K)=s_j(K)$ which implies that $\|\tilde K\|_\Phi=\|K\|_\Phi=1$. Even more, $\tilde K\in \cl B_1(\cl H)$ and hence $\sum_{j}s_j(T)s_j(K)=\sum_{j}s_j(T)s_j(\tilde K)$. But since
\begin{align*}
\sum_{j}s_j(T)s_j(\tilde K)\leq \sup\left\{\sum_{j}s_j(T)s_j(K):K\in \cl B_1(\cl H),\,K=\text{diag}\{s_j(K)\},\, \|K\|_{\Phi}= 1\right\},
\end{align*} 
it follows that $\beta \leq \gamma$ which establishes our second claim. From the above two observations we conclude that $\alpha=\gamma$, and consequently the assertion is proved.
\end{proof}

\subsection{Norm(s) that are not attained by the identity}

We now recall the following result which states that there exists a symmetric norm on $\cl B(\cl H)$ with respect to which the identity operator does not attain its norm.

\begin{thm}\cite[Proposition $1.3$]{SP}\label{Identity-nonnorming}
There exists a symmetric norm $\|\cdot\|_{\Phi_\pi^*}$ on $\cl B(\ell^2)$ such that $I\notin \cl N_{\Phi_\pi^*}(\ell^2)$.
\end{thm}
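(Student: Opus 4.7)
I would construct a concrete s.n.\ function $\Phi_\pi$ that is equivalent to the maximal s.n.\ function $\Phi_1$, and then show by direct calculation that $I$ cannot attain its $\Phi_\pi^*$-norm. Fix any strictly decreasing sequence of positive reals $\pi=(\pi_j)_{j\geq 1}$ with $\pi_1=1$ and $\pi_\infty:=\lim_j \pi_j > 0$ (for concreteness, $\pi_j = \tfrac12(1+2^{-j+1})$). Define
$$
\Phi_\pi(\xi) := \sum_{j} \pi_j\, \xi_j^*, \qquad \xi\in c_{00},
$$
where $\xi^*$ is the nonincreasing rearrangement of $(|\xi_j|)_j$. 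The three defining properties of an s.n.\ function are routine (rearrangement invariance and $\Phi_\pi(e_1)=\pi_1=1$ are immediate, while the triangle inequality is the classical Lorentz-type inequality for nonincreasing weights, cited from Gohberg--Krein). Moreover, the inequality chain $\pi_\infty \Phi_1 \leq \Phi_\pi \leq \Phi_1$ shows $\Phi_\pi$ is equivalent to $\Phi_1$, so $\mathfrak S_{\Phi_\pi}$ coincides elementwise with $\cl B_1(\cl H)$ and part~(2) of Proposition~\ref{Gohb-Krein} applies.

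Next, I would compute $\|I\|_{\Phi_\pi^*}$ exactly. Since $s_j(I)=1$ for every $j$, Proposition~\ref{PhiNorming-alternative-II} gives
$$
\|I\|_{\Phi_\pi^*}=\sup\Big\{\sum_j k_j : (k_j)\text{ nonincreasing, in }\ell^1,\ \sum_j \pi_j k_j = 1\Big\}.
$$
The bound $\pi_j\geq\pi_\infty$ forces $\sum_j k_j \leq 1/\pi_\infty$ for every admissible $k$. For the matching lower bound I would use the test sequences $k^{(n)} = c_n(1,1,\dots,1,0,0,\dots)$ with $n$ leading entries and $c_n = (\pi_1+\cdots+\pi_n)^{-1}$, which satisfy the normalization and give $\sum_j k_j^{(n)} = n/(\pi_1+\cdots+\pi_n) \to 1/\pi_\infty$ by a Ces\`aro argument. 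Hence $\|I\|_{\Phi_\pi^*} = 1/\pi_\infty$.

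Finally, I would rule out attainment, even by non-diagonal trace class operators. Suppose $K\in \cl B_1(\cl H)$ with $\|K\|_{\Phi_\pi}=1$ satisfied $|\tr(IK)| = |\tr K| = 1/\pi_\infty$. Then
$$
\tfrac{1}{\pi_\infty} \;=\; |\tr K| \;\leq\; \sum_{j} s_j(K), \qquad \sum_{j} \pi_j s_j(K) \;=\; 1,
$$
so
$$
\sum_{j}(\pi_j - \pi_\infty)\, s_j(K) \;=\; 1 \,-\, \pi_\infty \sum_{j} s_j(K) \;\leq\; 0.
$$
As each $\pi_j - \pi_\infty > 0$ and each $s_j(K) \geq 0$, the left side is nonnegative, so every term vanishes; this forces $s_j(K)=0$ for all $j$, i.e.\ $K=0$, contradicting $\|K\|_{\Phi_\pi}=1$. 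Thus $I\notin\cl N_{\Phi_\pi^*}(\ell^2(\bb N))$.

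The main obstacle is the verification that $\Phi_\pi$ satisfies the norm axioms, but since $\pi$ is nonincreasing, this is a standard weighted Lorentz-sum computation that I would simply cite from \cite{GK} rather than reprove. Once $\Phi_\pi$ is in hand, the computation of $\|I\|_{\Phi_\pi^*} = 1/\pi_\infty$ and the rigidity argument forcing $K=0$ are both short and elementary.
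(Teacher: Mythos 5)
Your construction is correct and lands in exactly the family the paper has in mind: a weight sequence $\pi$ that is strictly decreasing with $\pi_1=1$ and positive limit is precisely an element of $\widehat\Pi$, and $\Phi_\pi(\xi)=\sum_j\pi_j\xi_j^*$ is the associated s.n.\ function, equivalent to $\Phi_1$, so $\|\cdot\|_{\Phi_\pi^*}$ is a genuine symmetric norm on all of $\cl B(\ell^2)$ by Proposition~\ref{Gohb-Krein}(2). Where you diverge is in the non-attainment step. The paper (see the remark after the theorem and the machinery of Section~\ref{Characterization-Affiliated-To-Decreasing-Weights}, which the cited proof in \cite{SP} exemplifies) argues variationally: assuming a maximizing $K$ exists, one finds an index $M$ with $s_M(K)>s_{M+1}(K)$, replaces those two entries by their $\pi$-weighted average, and checks that $\frac{\pi_M}{\pi_{M+1}}>\frac{s_M(I)}{s_{M+1}(I)}=1$ forces a strict increase of $\sum_j s_j(I)s_j(K)$ --- a perturbation argument that requires first reducing to diagonal $K$ via Proposition~\ref{PhiNorming-alternative-II} and that generalizes to arbitrary positive noncompact $P$ (Theorem~\ref{normings-are-compacts}). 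You instead compute $\|I\|_{\Phi_\pi^*}=1/\pi_\infty$ exactly (consistent with the adjoint formula $\Phi_\pi^*(\xi)=\sup_n\{\sum_{j\le n}\xi_j/\sum_{j\le n}\pi_j\}$) and then run a rigidity argument: attainment plus $|\tr K|\le\sum_j s_j(K)$ and $\sum_j\pi_j s_j(K)=1$ forces $\sum_j(\pi_j-\pi_\infty)s_j(K)=0$, hence $K=0$. This is shorter, works directly for arbitrary (non-diagonal) trace-class $K$ without the diagonal reduction, and makes transparent that \emph{every} $\pi\in\widehat\Pi$ works for $I$; the trade-off is that it exploits the constancy of $s_j(I)$ and does not by itself extend to general noncompact positive operators the way the paper's perturbation argument does. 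Both proofs are complete and correct.
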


There is more to this theorem than meets the eye; its proof is constructive and illustrates an elegant technique of producing
(a family of) symmetric norms on $\cl B(\cl H)$ with respect to each of which the identity operator does not attain its norm. More precisely, the proof demonstrates a well-defined family of s.n.functions --- henceforth referred to as ``s.n.functions 
affiliated to strictly decreasing weights'' --- which naturally 
generate such symmetric norms. In what follows we formally introduce this family of s.n.functions.

Let $\widehat \Pi$ denote the set of all strictly decreasing convergent sequences of positive numbers with their first term equal to $1$ and positive limit, that is, 
$$\widehat \Pi=\{\pi:=(\pi_n)_{n\in \bb N}: \pi_1=1,\, \lim\pi_n>0, \text{ and } \pi_k>\pi_{k+1} \text{ for every }k\in \bb N\}.$$ 
For each $\pi\in \widehat \Pi$, let $\Phi_\pi$ denote the symmetric norming function defined by $\Phi_\pi(\xi_1,\xi_2,...)=\sum_j\pi_j\xi_j$ and observe that $\Phi_\pi$ is equivalent to the maximal s.n.function $\Phi_1$.
Theorem \ref{Identity-nonnorming} essentially proves that $I\notin \cl N_{\Phi_\pi^*}(\cl \ell^2)$ whenever $\Phi_\pi$ belongs to the family $\{\Phi_\pi:\pi\in \widehat \Pi\}$ of s.n.functions affiliated to strictly decreasing weights. 

\subsection{A few more norms that are not attained by the identity}\label{generalization-of-a-result-from-previous-paper}
We construct more symmetric norms on $\bh$ with respect to which the identity operator does not attain its norm. 
This subsection aims at
extending Theorem \ref{Identity-nonnorming} to a larger family
of symmetric norms, and thus generalizing it. 

Observe that the proof of Theorem \ref{Identity-nonnorming}, in its construction of symmetric norms, assumes that the sequence $\pi$ belongs to $\widehat{\Pi}$ and is thus strictly decreasing.
We claim that the requirement for $\pi$ to be a strictly decreasing sequence is not necessary for the result to hold, as long as there exists a natural number $N$ such that $\pi_N>\pi_{N+1}$. Let us use $\Pi$ to denote the set of all nonincreasing sequences of positive numbers with their first term equal to $1$ and positive limit (so that $\widehat{\Pi}\subseteq \Pi$)\footnote{For the convenience of the readers, it would perhaps be worth to recall that we have used $\Pi$, in \cite{SP}, to denote the set of all nonincreasing sequences of positive numbers with their first term equal to $1$. However, in this manuscript, we use $\Pi$ to denote the set of all nonincreasing sequences of positive numbers with their first term equal to $1$ and positive limit. This abuse of notation deemed necessary to avoid too many symbols.}. 
Thus, if $\pi\in \Pi$ is not the constant sequence ${\bf 1}=(1,1,...)$, then there exists a natural number $N$ such that $\pi_N>\pi_{N+1}$, and consequently, 
our claim amounts to showing that the adjoint of the
s.n.function defined via $\pi$ generates a symmetric norm
on $\cl B(\ell^2)$ with
respect to which the identity operator does not attain its norm.
 
We conclude this section by proving the following result which, in effect, extends the preceding theorem to every symmetric norm $\Phi_\pi$ in the family $\{\Phi_\pi:\pi\in \Pi\}$ of s.n.functions except when $\pi\in \Pi$ is the constant sequence ${\bf 1}=(1,1,...)$; earlier, the result was shown to hold for the family $\{\Phi_\pi:\pi\in \widehat{\Pi}\}$ of s.n.functions. 

\begin{thm}\label{Identity-nonnorming-for-all-nonconstantsequences}
Let $\Pi$ be the set of all nonincreasing convergent sequence of positive numbers with their first term equal to $1$ and positive limit, that is, $$\Pi=\{\pi:=(\pi_n)_{n\in \bb N}: \pi_1=1,\, \lim_{n}\pi_n>0, \text{ and } \pi_k\geq\pi_{k+1} \text{ for each }k\in \bb N\},$$ and consider the subset $\Pi\setminus \{{\bf 1}\}$ of $\Pi$ consisting of all nonincreasing convergent sequence of positive numbers with their first term equal to $1$ and positive limit except the constant sequence ${\bf 1}$. For each $\pi\in \Pi\setminus \{{\bf 1}\}$, let $\Phi_\pi$ denote the symmetric norming function defined by $\Phi_\pi(\xi_1,\xi_2,...)=\sum_j\pi_j\xi_j.$ Then \begin{enumerate}
\item $\Phi_\pi$ is equivalent to the maximal s.n.function $\Phi_1$; and 
\item for every $\pi\in \Pi\setminus\{{\bf 1}\}$, $I\notin \cl N_{\Phi_{\pi}^*}(\ell^2)$.
\end{enumerate}
Alternatively, $I\notin \cl N_{\Phi_{\pi}^*}(\ell^2)$ for every $\Phi_\pi$ that belongs to the family $\{\Phi_{\pi}:\pi\in \Pi\setminus\{{\bf 1}\}\}$ of s.n.functions.  
\end{thm}

\begin{proof}
That each s.n.function from the family $\{\Phi_{\pi}:\pi\in \Pi\setminus\{{\bf 1}\}\}$ of s.n.functions is equivalent to the maximal s.n.function $\Phi_1$ is, now, a trivial observation. 

The proof of the second claim is almost along the lines of the proof of Theorem \ref{Identity-nonnorming}. Let $\pi=(\pi_n)_{n\in \bb N}\in \Pi\setminus \{{\bf 1}\}$ and let $\Phi_{\pi}$ be the s.n.function generated by $\pi$, that is, $\Phi_{\pi}(\xi_1,\xi_2,...)=\sum_j\pi_j\xi_j$. Now, contrapositively assume that $I\in \cl N_{\Phi_\pi^*}$, then the supremum, 
 $$\sup\left\{\sum_{j}s_j(K):K\in \cl B_1(\ell^2),\, K=\text{diag}\{s_1(K),s_2(K),...\},\,\|K\|_{\Phi_\pi}=1\right\},$$ 
 is attained, that is, 
  there exists $K_0=\text{diag}\{s_1(K_0),s_2(K_0),...\}\in \cl B_1(\ell^2)$ with $\sum_{j}\pi_js_j(K_0)=1$ such that $\|I\|_{\Phi_\pi^*}=\sum_js_j(K_0).$ 
  We will prove the existence of an operator $\tilde K\in \cl B_1(\ell^2), \,\|\tilde K\|_{\Phi_\pi}=1$ of the form $\tilde K=\text{diag}\{s_1(\tilde K),s_2(\tilde K),...\}$ such that $\sum_is_i(\tilde K)> \sum_js_j(K_0)$. To this end, since $\pi\in \Pi\setminus \{{\bf 1}\}$, there exists a natural number $M$ such that $\pi_M>\pi_{M+1}$. Set 
$$
\lambda=\frac{\pi_M}{\pi_{M+1}},
$$
and choose $\epsilon >0$ such that 
$$
s_M(K_0)-\epsilon=s_{M+1}(K_0)+\lambda \epsilon.
$$
(Of course, there is only one such $\epsilon$.)
Now define $\tilde K$ to be the diagonal operator given by 
$$
\tilde K:=\begin{bmatrix}
s_1(K_0)\\
&\ddots\\
&&s_{M-1}(K_0)\\
&&&s_M(K_0)-\epsilon\\
&&&&s_{M+1}(K_0)+\lambda \epsilon\\
&&&&&s_{M+1}(K_0)\\
&&&&&&\ddots\\
\end{bmatrix}.
$$
Before proceeding further, notice that the singular numbers of the above defined opearator $\tilde K$ are precisely the diagonal elements, and that the equation preceding the definition of $\tilde K$ guarantees that these s-numbers are arranged in a nonincreasing manner on the diagonal. Next we observe that
\begin{align*}
-\pi_M\epsilon+\pi_{M+1}\lambda \epsilon=-\pi_M\epsilon+\pi_{M+1}\frac{\pi_M}{\pi_{M+1}} \epsilon=-\pi_M\epsilon+-\pi_M\epsilon=0,
\end{align*}
and hence \begin{align*}
 \pi_M(s_M(K_0)-\epsilon)+\pi_{M+1}(s_{M+1}(K_0)+\lambda \epsilon)=\pi_Ms_M(K_0)+\pi_{M+1}s_{M+1}(K_0).
 \end{align*}
 This yields 
 \begin{align*}
\sum_j\pi_js_j(\tilde K)=\sum_j\pi_js_j(K_0),\text{ and hence }\|\tilde K\|_{\Phi_\pi}=
\|K_0\|_{\Phi_\pi}=1.
 \end{align*} 
Consequently, we have $\tilde K\in \cl B_1(\ell^2)$. Moreover, $\tilde K$ is of the form $\tilde K=\diag\{s_1(\tilde K), s_2(\tilde K),...\}$.

However, since  $\lambda >1$ and $\epsilon >0$, we have
$$
(s_M(K_0)-\epsilon) + (s_{M+1}(K_0)+\lambda \epsilon)= s_M(K_0) + s_{M+1}(K_0)+(\lambda-1) \epsilon > s_{M}(K_0)+s_{M+1}(K_0),
$$
which allows us to infer that 
\begin{align*}
\sum_js_j(\tilde K)&=\sum_{j=1}^{M-1}s_j(K_0)+(s_M(K_0)-\epsilon) + (s_{M+1}(K_0)+\lambda \epsilon) +\sum_{j>M+1}\pi_js_j(K_0)\\
&>\sum_{j=1}^{M-1}s_j(K_0)+\left(s_{M}(K_0)+s_{M+1}(K_0)\right) +\sum_{j>M+1}\pi_js_j(K_0)\\
&=\sum_js_j(K_0),
\end{align*}
which contradicts the assumption that $\sum_js_j(K_0)$ ($=|\tr(K_0)|$) is the supremum of the set 
$$
\left\{\sum_{j}s_j(K):K\in \cl B_1(\ell^2),\, K=\text{diag}\{s_1(K),s_2(K),...\},\,\|K\|_{\Phi_\pi}=1\right\}.
$$ Since the operator $K_0$ with which we began our discussion is arbitrary, it follows that for any given operator in $\cl B_1(\ell^2)$ with unit norm, one can find another operator in $ \cl B_1(\ell^2)$ with unit norm with trace of larger magnitude and hence the supremum of the above set can never be attained. This shows that the identity operator does not attain its norm.
\end{proof}

\section{Symmetric norming operators affiliated to strictly decreasing weights and their characterization}\label{Characterization-Affiliated-To-Decreasing-Weights}

In this section we return to the study of the family of symmetric norms on $\cl B(\cl H)$ generated by the duals (or adjoints) of s.n.functions from the family $\{\Phi_{\pi}:\pi\in \widehat{\Pi}\}$ of s.n.functions affiliated to strictly decreasing weights, and establish a characterization theorem for  operators in $\cl B(\cl H)$ that are symmetric norming with respect to \emph{every} such symmetric norm. (Recall that for each $\pi\in \widehat \Pi$, $\Phi_\pi$ denotes the s.n.function defined by $\Phi_\pi(\xi_1,\xi_2,...)=\sum_j\pi_j\xi_j$, and that $\Phi_\pi$ is equivalent to the maximal s.n.function $\Phi_1$.) This section also studies the operators in $\cl B(\cl H)$ that are absolutely symmetric norming with respect to \emph{every} symmetric norm in the family and presents a characterization theorem for those as well. It turns out that an operator is symmetric norming with respect to every symmetric norm in the family if and only if it is absolutely symmetric norming with respect to every symmetric norm in the family. This ``characterization theorem'' is the main theorem of this section.

Theorem \ref{Identity-nonnorming} essentially proves that $I\notin \cl N_{\Phi_\pi^*}(\cl H)$ whenever $\pi\in \widehat \Pi$.
We know that for an arbitrary s.n.function $\Phi$ equivalent to $\Phi_1$, we have $\cl N_{\Phi^*}(\cl H)\nsubseteq \cl B_0(\cl H)$. However, it is of interest to know whether $\cl N_{\Phi_\pi^*}(\cl H)\subseteq \cl B_0(\cl H)$ if $\pi\in \widehat \Pi$; for if the answer to this question is affirmative, then Theorem \ref{compacts-are-absolutely-symmetric-norming} would yield $\cl N_{\Phi_\pi^*}(\cl H)= \cl B_0(\cl H)$ for every $\pi\in \widehat \Pi$ (and would thus characterize the $\Phi_{\pi}^*$-norming operators in $\cl B(\cl H)$ for every $\pi\in \widehat \Pi$). By Proposition \ref{T-iff-|T|-phi-star} it suffices to know whether $\cl N_{\Phi_\pi^*}(\cl H)\cap \cl B(\cl H)_+\subseteq \cl B_0(\cl H)$ where $\cl B(\cl H)_+=\{T\in \cl B(\cl H):T\geq 0\}.$ The following lemma and example prove the existence of $\pi\in \widehat \Pi$ such that $\cl N_{\Phi_\pi^*}(\cl H)\nsubseteq \cl B_0(\cl H)$.

\begin{lemma}[\cite{GK}]
If $\Phi_\pi\in \{\Phi_\pi:\pi\in \widehat \Pi\}$, then its adjoint $\Phi_\pi^*$ is given by 
$$
\Phi_\pi^*(\xi)=\sup_{n}\left\{\frac{\sum_{j=1}^n\xi_j}{\sum_{j=1}^n\pi_j}\right\}\, \text{ for every } \xi=(\xi_i)_{i\in \bb N}\in c_{00}^*. 
$$ Moreover, the s.n.function $\Phi_\pi^*$ is equivalent to the minimal s.n.function.
\end{lemma}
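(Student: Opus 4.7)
The plan is to start from the definition of the adjoint s.n.\ function
$$
\Phi_\pi^*(\eta)=\max\Bigl\{\sum_j \eta_j\xi_j : \xi\in c_{00}^*,\, \sum_j\pi_j\xi_j=1\Bigr\}
$$
and reduce the constrained optimisation to a one-dimensional search over $n$ by a summation-by-parts trick. Fix $\eta\in c_{00}^*$, let $N$ be such that $\eta_j=0$ for $j>N$, and set $E_n:=\sum_{j=1}^n\eta_j$, $P_n:=\sum_{j=1}^n\pi_j$. For any candidate $\xi\in c_{00}^*$ supported in $\{1,\dots,M\}$, introduce the nonnegative increments $a_k:=\xi_k-\xi_{k+1}\ge 0$ (with $a_M:=\xi_M$). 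Since $\xi_j=\sum_{k\ge j}a_k$, Abel summation yields $\sum_j\eta_j\xi_j=\sum_k a_k E_k$ and $\sum_j\pi_j\xi_j=\sum_k a_k P_k$. Thus the ratio
$$
\frac{\sum_j\eta_j\xi_j}{\sum_j\pi_j\xi_j}=\frac{\sum_k a_kE_k}{\sum_k a_kP_k}
$$
is a convex combination of the numbers $E_k/P_k$ with weights $a_kP_k/\sum_\ell a_\ell P_\ell$, hence is bounded above by $\sup_k E_k/P_k$.

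Next I would show the bound is attained and equals a maximum over $n$. Concentrating all mass at a single index $n^*$ corresponds to taking $\xi_1=\xi_2=\dots=\xi_{n^*}=1/P_{n^*}$ and $\xi_j=0$ for $j>n^*$, which is legitimate since the resulting $\xi$ lies in $c_{00}^*$ and satisfies $\Phi_\pi(\xi)=1$; this choice yields $\sum_j\eta_j\xi_j=E_{n^*}/P_{n^*}$. Because $E_n$ stabilises at $E_N$ for $n\ge N$ while $P_n$ is strictly increasing, the sequence $E_n/P_n$ is eventually decreasing, so $\sup_{n\in\bb N}E_n/P_n$ is achieved at some finite $n^*$. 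Combining the two inequalities gives
$$
\Phi_\pi^*(\eta)=\sup_{n\in\bb N}\frac{\sum_{j=1}^n\eta_j}{\sum_{j=1}^n\pi_j},
$$
which is the stated formula.

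For the equivalence with the minimal s.n.\ function $\Phi_\infty(\eta)=\eta_1$, the lower bound follows by selecting $n=1$: since $\pi_1=1$,
$$
\Phi_\pi^*(\eta)\ge \frac{\eta_1}{\pi_1}=\eta_1=\Phi_\infty(\eta).
$$
For the upper bound, monotonicity of $\eta$ gives $\sum_{j=1}^n\eta_j\le n\eta_1$, and the hypothesis $\pi_\infty:=\lim_n \pi_n>0$ (combined with $\pi_j\ge\pi_\infty$) yields $\sum_{j=1}^n\pi_j\ge n\pi_\infty$. Hence
$$
\Phi_\pi^*(\eta)\le \sup_n\frac{n\eta_1}{n\pi_\infty}=\frac{1}{\pi_\infty}\,\Phi_\infty(\eta),
$$
so $\Phi_\infty\le \Phi_\pi^*\le \pi_\infty^{-1}\Phi_\infty$, which is precisely the equivalence with $\Phi_\infty$.

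The only genuinely delicate point is the Abel-summation step and the observation that the resulting ratio is a convex combination of the scalars $E_k/P_k$; once that is in place, both the explicit formula and the equivalence with $\Phi_\infty$ reduce to elementary estimates built on $\pi_1=1$ and $\lim\pi_n>0$, the two distinguishing features of weights in $\widehat\Pi$.
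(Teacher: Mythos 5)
Your proof is correct. Note, however, that the paper does not prove this lemma at all: it is quoted verbatim from Gohberg and Krein (Chapter 3, Lemma 15.1 and the discussion preceding Theorem 15.2), so there is no in-paper argument to compare against. What you have supplied is a complete, self-contained and elementary derivation: the Abel-summation decomposition $\xi_j=\sum_{k\ge j}a_k$ with $a_k\ge 0$ exhibits the objective ratio as a convex combination of the quantities $E_k/P_k$, which gives the upper bound, and the flat truncated sequences $\xi=(1/P_{n^*},\dots,1/P_{n^*},0,\dots)$ realize each $E_{n^*}/P_{n^*}$, which gives attainment; this is essentially the classical argument behind the Gohberg--Krein computation of the adjoint of $\Phi_\pi$. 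Your two-sided estimate $\Phi_\infty\le\Phi_\pi^*\le\pi_\infty^{-1}\Phi_\infty$ correctly uses the two defining features of $\widehat\Pi$ ($\pi_1=1$ for the lower bound, $\lim_n\pi_n=\pi_\infty>0$ for the upper bound) and matches the paper's definition of equivalence of s.n.\ functions. The only points worth making explicit in a final write-up are that the convex-combination step requires $\sum_\ell a_\ell P_\ell=\Phi_\pi(\xi)=1>0$, and that the eventual monotone decrease of $E_n/P_n$ for $n\ge N$ is what justifies replacing the supremum by a maximum, consistent with the ``max'' in the paper's definition of the adjoint.
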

For the proof of the above lemma we refer the reader to \cite[Chapter 3, Lemma 15.1, Page 147]{GK}; readers can also see Pages 148-149, and the paragraph preceding Theorem 15.2 of the monograph \cite{GK}.

\begin{exam}\label{violates-the-intuition}
Consider the positive diagonal operator 
$$P=\begin{bmatrix}
 2\\
    & 1+\frac{1}{2} & & & &\text{\huge0}&\\
    & &1+ \frac{1}{3}\\
    & & & 1+\frac{1}{4}\\
    & & & & \ddots&\\
    &\text{\huge0} & & & &1+\frac{1}{n} &\\
    & & &  & & & \ddots
\end{bmatrix} \in \cl{B}( \ell^2),$$
with respect to an orthonormal basis $B=\{v_i:i\in \bb N\}$. Let $\pi=(\pi_n)_{n\in \bb N}$ be a sequence of real numbers defined by $\pi_n:=\frac{1}{2}+\frac{1-1/2}{n}=\frac{n+1}{2n}$. That $\pi\in \widehat \Pi$ is obvious. Consequently, $\Phi_\pi$ is equivalent to the maximal s.n.function $\Phi_1$ and the Banach space dual $\ff S_{\Phi_\pi}^* $ of the s.n.ideal $\ff S_{\Phi_\pi}$ is isometrically isomorphic to $(\cl B(\ell ^2),\|\cdot\|_{\Phi_\pi^*})$, that is, $\ff S_{\Phi_\pi}\cong(\cl B(\ell ^2),\|\cdot\|_{\Phi_\pi^*})$ isometrically. An easy computation yields 
$$
\|P\|_{\Phi_\pi^*}
=\sup_{n}\left\{\frac{\sum_{j=1}^ns_j(P)}{\sum_{j=1}^n\pi_j}\right\}
=\sup_{n}\left\{\frac{ n+(1+1/2+...+1/n)}{\frac{1}{2}(n+(1+1/2+...+1/n))}\right\}
=2.
$$
If we define $K$ to be the diagonal operator given by 
$$
K=\begin{pmatrix}
1\\
&0\\
&&\ddots\\
&& &0\\
&&&&\ddots
\end{pmatrix}\in \cl{B}_1( \ell^2)=\ff S_{\Phi_\pi},
$$
then we have $\|K\|_{\Phi_\pi}=\sum_j\pi_js_j(K)=1$ and $|\tr(PK)|=|\tr(\text{diag}\{2,0,0,...\})|$
$=2=\|P\|_{\Phi_\pi^*}$ which implies that $P\in\cl N_{\Phi_\pi^*}(\cl H)$. However, $P\notin \cl B_0(\ell^2)$. This proves the existence of $\pi\in \widehat \Pi$ such that $\cl N_{\Phi_\pi^*}(\cl H)\nsubseteq \cl B_0(\cl H)$.
\end{exam}

The above example establishes the fact that even for a given $\Phi_\pi$ from the family $\{\Phi_\pi:\pi\in \widehat \Pi\}$ of s.n.functions, it is too much to ask for the set $\cl N_{\Phi_\pi^*}(\cl H)$ to be contained in the compacts. So let us be more modest and ask whether $P\in \cl B(\cl H)$ is compact whenever $P\in \cl N_{\Phi_\pi^*}(\cl H)\cap \cl B(\cl H)_+$ for \emph{every} $\Phi_\pi\in \{\Phi_\pi:\pi\in \widehat \Pi\}.$ The answer to this question is a resounding yes as is stated in the Theorem \ref{mini-characterization-positives}. 

Before we prove this theorem, let us find $\pi\in \widehat \Pi$ such that the positive noncompact operator $P$ of Example \ref{violates-the-intuition} does not belong to $\cl N_{\Phi_{\pi}^*}(\cl H)$. The example which follows illustrates this and hence agrees with the Theorem \ref{mini-characterization-positives}. 

\begin{exam}\label{Intuitive-to-minicharacterization-for-positives}
Let $\pi=(\pi_n)_{n\in \bb N}$  be a sequence defined by $$\pi_n:=\frac{1}{3}+\frac{1-1/3}{n}=\frac{n+2}{3n}.$$ Then $\pi\in \widehat \Pi,\,\,\Phi_\pi\in  \{\Phi_\pi:\pi\in \widehat \Pi\}\text{ and }\ff S_{\Phi_\pi}\cong(\cl B(\ell ^2),\|\cdot\|_{\Phi_\pi^*})$ isometrically. We consider the operator $P$ of Example \ref{violates-the-intuition} and prove that $P\notin \cl N_{\Phi_\pi^*}(\ell^2)$. To show this, we assume that $P\in \cl N_{\Phi_\pi^*}(\ell^2)$, that is, the supremum,
$$\sup\left\{\sum_{j}s_j(P)s_j(K):K\in \cl B_1(\ell^2), K=\text{diag}\{s_1(K),s_2(K),...\}, \|K\|_{\Phi_\pi}=1\right\},$$ is attained, 
and we deduce a contradiction from this assumption. So there exists $K=\text{diag}\{s_1(K),s_2(K),...\}\in \cl B_1(\ell^2)$ with $\|K\|_{\Phi_\pi}=\sum_j\pi_js_j(K)=1$ such that $\|P\|_{\Phi_\pi^*}=|\tr(PK)|=\sum_js_j(P)s_j(K)$. 
Since $K\in \cl B_1(\cl H)\subseteq \cl B_0(\cl H)$, we have $\lim_{j\rightarrow \infty} s_j(K)=0$. This forces the existence of a natural number $M$ such that $s_M(K)>s_{M+1}(K).$ All that remains is to show the existence of an operator $\tilde K\in \cl B_1(\cl H), \,\|\tilde K\|_{\Phi_\pi}=1$ of the form $\tilde K=\text{diag}\{s_1(\tilde K),s_2(\tilde K),...\}$ such that $\sum_js_j(P)s_j(\tilde K)> \sum_js_j(P)s_j(K)$. If we define $$
\lambda:=\frac{\sum_{j=M}^{M+1}\pi_js_j(K)}{\sum_{j=M}^{M+1}\pi_j}=\frac{\pi_Ms_M(K)+\pi_{M+1}s_{M+1}(K)}{\pi_M+\pi_{M+1}}
$$ 
and let $\tilde K$ be the diagonal operator defined by 
$$
\tilde K:=\begin{pmatrix}
s_1(K)\\
&\ddots&&\\
&&s_{M-1}(K)&&\\
&&&\lambda&\\
&&&&\lambda\\
&&&&&s_{M+1}(K)\\
&&&&&&\ddots
\end{pmatrix},
$$
then for every $j$, $s_j(\tilde K)=s_j(K)$ which implies that $\|\tilde K\|_{\Phi_\pi}=\|K\|_{\Phi_\pi}=1$ so that $\tilde K\in \cl B_1(\ell^2)$ and is of the form $\tilde K=\text{diag}\{s_1(\tilde K),s_2(\tilde K),...\}$. We now prove that $\tilde K$ is the required candidate. It is not too hard to see that 
$$
\frac{\pi_M}{\pi_{M+1}}>\frac{s_M(P)}{s_{M+1}(P)},
$$ 
which yields,
$$
\pi_Ms_{M+1}(P)(s_M( K)-s_{M+1}( K))>\pi_{M+1}s_M(P)(s_M( K)-s_{M+1}(K)).
$$
Simplification and rearrangement of terms in the above inequality gives
$$
\left(s_M(P)+s_{M+1}(P)\right)\left[\frac{\pi_Ms_M(K)+\pi_{M+1}s_{M+1}(K)}{\pi_M+\pi_{M+1}}\right]>s_M(P)s_M(K)+s_{M+1}(P)s_{M+1}(K).
$$
But the left hand side of the above inequation is actually 
$s_M(P)s_M(\tilde K)+s_{M+1}(P)s_{M+1}(\tilde K),$ which implies  
$$
s_M(P)s_M(\tilde K)+s_{M+1}(P)s_{M+1}(\tilde K)>s_M(P)s_M(K)+s_{M+1}(P)s_{M+1}(K).
$$
It then immediately follows that $\sum_{j}s_j(P)s_j(\tilde K)>\sum_{j}s_j(P)s_j(K)$ which contradicts the assumption that $\sum_{j}s_j(P)s_j(K)$ is the supremum of the set $$\left\{\sum_{j}s_j(P)s_j(K):K\in \cl B_1(\ell^2), K=\text{diag}\{s_1(K),s_2(K),...\}, \|K\|_{\Phi_\pi}=1\right\}$$ and this is precisely the assertion of our claim.
\end{exam}

\begin{remark}
The working rule of the above example is illuminating. The sequence $\pi=(\pi_n)_{n\in \bb N}\in \widehat \Pi$ has been cleverly chosen to construct the example. The significance of choosing this sequence lies in the fact that it guarantees the existence of a natural number $M$ so that $s_M(K)>s_{M+1}(K)$ as well as $\frac{\pi_M}{\pi_{M+1}}>\frac{s_M(P)}{s_{M+1}(P)}$. We use this example as a tool to prove the following proposition.
\end{remark}

\begin{prop}
Let $P\in \cl B(\cl H)$ be a positive operator. If $\pi\in\widehat \Pi$ such that $$\frac{\pi_n}{\pi_{n+1}}>\frac{s_n(P)}{s_{n+1}(P)}\,\text{ for every }\, n\in \bb N,$$
then $P\notin \cl N_{\Phi_\pi^*}(\cl H)$.
\end{prop}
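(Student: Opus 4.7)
The plan is to argue by contradiction, following the template already rehearsed in Example \ref{Intuitive-to-minicharacterization-for-positives}. Assume $P \in \cl N_{\Phi_\pi^*}(\cl H)$. By Proposition \ref{PhiNorming-alternative-II}, the supremum defining $\|P\|_{\Phi_\pi^*}$ is attained at a diagonal operator $K = \text{diag}\{s_j(K)\}_j \in \cl B_1(\cl H)$ with $\|K\|_{\Phi_\pi} = \sum_j \pi_j s_j(K) = 1$, and we have $\|P\|_{\Phi_\pi^*} = \sum_j s_j(P) s_j(K)$. I will construct another diagonal $\tilde K \in \cl B_1(\cl H)$ with $\|\tilde K\|_{\Phi_\pi} = 1$ satisfying $\sum_j s_j(P) s_j(\tilde K) > \sum_j s_j(P) s_j(K)$, contradicting the optimality of $K$.

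First, since $K$ is a nonzero compact operator, its nonincreasing singular value sequence converges to $0$ and cannot be constant, so there exists an index $M \in \bb N$ with $s_M(K) > s_{M+1}(K)$. Define
$$
t := \frac{\pi_M s_M(K) + \pi_{M+1} s_{M+1}(K)}{\pi_M + \pi_{M+1}},
$$
which, as a weighted average with positive weights, satisfies $s_{M+1}(K) < t < s_M(K)$. Let $\tilde K$ be the diagonal operator obtained from $K$ by replacing the $M$th and $(M+1)$st diagonal entries with $t$. Since $s_{M-1}(K) \geq s_M(K) > t$ and $t > s_{M+1}(K) \geq s_{M+2}(K)$, the diagonal entries of $\tilde K$ remain in nonincreasing order, so $s_M(\tilde K) = s_{M+1}(\tilde K) = t$ and $s_j(\tilde K) = s_j(K)$ for $j \neq M, M+1$. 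The very definition of $t$ forces $\pi_M t + \pi_{M+1} t = \pi_M s_M(K) + \pi_{M+1} s_{M+1}(K)$, whence $\|\tilde K\|_{\Phi_\pi} = \|K\|_{\Phi_\pi} = 1$.

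It remains to verify $\sum_j s_j(P) s_j(\tilde K) > \sum_j s_j(P) s_j(K)$, which after cancellation of all terms with $j \neq M, M+1$ reduces to
$$
(s_M(P) + s_{M+1}(P))\, t > s_M(P) s_M(K) + s_{M+1}(P) s_{M+1}(K).
$$
Substituting the expression for $t$ and clearing the positive denominator $\pi_M + \pi_{M+1}$, this is equivalent to
$$
\bigl(\pi_M s_{M+1}(P) - \pi_{M+1} s_M(P)\bigr)\bigl(s_M(K) - s_{M+1}(K)\bigr) > 0.
$$
The second factor is positive by the choice of $M$, while the first factor is positive by the standing hypothesis $\pi_M / \pi_{M+1} > s_M(P) / s_{M+1}(P)$. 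This produces the desired contradiction and yields $P \notin \cl N_{\Phi_\pi^*}(\cl H)$.

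The principal point to be careful about, rather than a serious obstacle, is the bookkeeping in the swap step: one must confirm that inserting the interpolated value $t$ in positions $M$ and $M+1$ does not disturb the nonincreasing arrangement of the diagonal, so that both $\|\tilde K\|_{\Phi_\pi}$ and $\sum_j s_j(P) s_j(\tilde K)$ are computed against the same singular-value positions as for $K$. Once this is in place, the algebra collapses exactly to the ratio hypothesis, which is thereby revealed as precisely the condition that permits the swap to strictly improve the pairing.
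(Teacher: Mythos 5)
Your proof is correct and follows essentially the same route as the paper's: assume attainment at a diagonal $K$, locate $M$ with $s_M(K)>s_{M+1}(K)$, replace the two entries by their $\pi$-weighted average $t$, and observe that the ratio hypothesis makes the swap strictly improve the pairing. In fact you are slightly more careful than the printed proof, which asserts $s_j(\tilde K)=s_j(K)$ for all $j$ (false, since $t$ lies strictly between $s_{M+1}(K)$ and $s_M(K)$); your justification of $\|\tilde K\|_{\Phi_\pi}=\|K\|_{\Phi_\pi}$ via the identity $\pi_M t+\pi_{M+1}t=\pi_M s_M(K)+\pi_{M+1}s_{M+1}(K)$, together with the check that the new diagonal is still nonincreasing, is the right repair.
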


\begin{proof}
 To show that $P\notin \cl N_{\Phi_\pi^*}(\cl H)$, we assume that $P\in \cl N_{\Phi_\pi^*}(\cl H)$,
and we deduce a contradiction from this assumption. 
If $P\in \cl N_{\Phi_\pi^*}(\cl H)$, then there exists $K=\text{diag}(s_1(K),s_2(K),...)$ in $\cl B_1(\cl H)$ with $\|K\|_{\Phi_\pi}=\sum_j\pi_js_j(K)=1$ such that $\|P\|_{\Phi_\pi^*}=|\tr(PK)|=\sum_js_j(P)s_j(K)$. 
Since $K\in \cl B_1(\cl H)\subseteq \cl B_0(\cl H)$, we have $\lim_{j\rightarrow \infty} s_j(K)=0$. This forces the existence of a natural number $M$ such that $s_M(K)>s_{M+1}(K).$ 
We complete the proof by establishing the existence of an operator $\tilde K\in \cl B_1(\cl H), \,\|\tilde K\|_{\Phi_\pi}=1$ of the form $\tilde K=\text{diag}\{s_1(\tilde K),s_2(\tilde K),...\}$ such that $\sum_js_j(P)s_j(\tilde K)> \sum_js_j(P)s_j(K)$. To this end, we define 
$$
\lambda:=\frac{\sum_{j=M}^{M+1}\pi_js_j(K)}{\sum_{j=M}^{M+1}\pi_j},
$$ 
and let 
$$
\tilde K:=\begin{pmatrix}
s_1(K)\\
&\ddots&&\\
&&s_{M-1}(K)&&\\
&&&\lambda&\\
&&&&\lambda\\
&&&&&s_{M+1}(K)\\
&&&&&&\ddots
\end{pmatrix}.
$$
It can be verified that $\|\tilde K\|_{\Phi_\pi}=\|K\|_{\Phi_\pi}=1$ so that $\tilde K\in \cl B_1(\ell^2)$ and is of the form $\tilde K=\text{diag}\{s_j(\tilde K)\}$. However, since
$$
\frac{\pi_n}{\pi_{n+1}}>\frac{s_n(P)}{s_{n+1}(P)}\,\text{ for every }\, n\in \bb N,
$$
it follows that 
$$
\frac{\pi_M}{\pi_{M+1}}>\frac{s_M(P)}{s_{M+1}(P)},
$$ 
and thus we have,
$$
s_M(P)s_M(\tilde K)+s_{M+1}(P)s_{M+1}(\tilde K)>s_M(P)s_M(K)+s_{M+1}(P)s_{M+1}(K),
$$
which yields
 $$\sum_{j}s_j(P)s_j(\tilde K)>\sum_{j}s_j(P)s_j(K)=\|P\|_{\Phi_\pi^*},$$
which contradicts the assumption that $\sum_{j}s_j(P)s_j(K)$ is the supremum of the set $$\left\{\sum_{j}s_j(P)s_j(K):K\in \cl B_1(\ell^2), K=\text{diag}\{s_1(K),s_2(K),...\}, \|K\|_{\Phi_\pi}=1\right\}.$$ This proves our assertion.

\end{proof}

\begin{thm}\label{universally-symmetric-normings-are-compacts}
Let $P\in \cl B(\cl H)$ be a positive operator and $\lim_{j\to \infty}s_j(P)\neq 0$, that is, $P$ is not compact. Then there exists $\pi\in \widehat \Pi$ such that $$
\frac{\pi_n}{\pi_{n+1}}>\frac{s_n(P)}{s_{n+1}(P)}\,\text{ for every }\, n\in \bb N.
$$

Alternatively, if $P\in \cl B(\cl H)$ is positive noncompact operator then there exists $\pi\in \widehat \Pi$ such that $P\notin \cl N_{\Phi_\pi^*}(\cl H)$.
\end{thm}

\begin{proof}
Since $P\geq 0$ and $\lim_{j\to \infty}s_j(P)\neq 0$, there exists $s>0$ such that $\lim_{j\to \infty}s_j(P)=s.$ If we take $\alpha_n:=\frac{1}{e^{1/n^2}}$ for all $n\in \bb N$ and define a sequence $\pi=(\pi_n)_{n\in \bb N}$ recursively by 
$$
\pi_1=1 \,\text{ and }\, \frac{\pi_{n+1}}{\pi_n}:=\alpha_n \frac{s_{n+1}(P)}{s_n(P)} \,\text{ for all }n\in \bb N,
$$
we have $\alpha_n<1$ for all $n\in \bb N$.
Then the fact that $s_n(P)$ is a nonincreasing sequence implies that 
$\frac{\pi_{n+1}}{\pi_n}< \frac{s_{n+1}(P)}{s_n(P)}$ for all $n\in \bb N$. Therefore, $\frac{\pi_n}{\pi_{n+1}}> \frac{s_{n}(P)}{s_{n+1}(P)}$ for every $n\in \bb N.$ All that remains is to show that $\pi\in\widehat{\Pi}$. That $\pi_1=1$ and $(\pi_n)_{n\in \bb N}$ is a strictly decreasing sequence of positive real numbers are trivial observations. We complete the proof by showing that $\lim_{n\to \infty}\pi_n> 0.$ An easy calculation shows that 
$$\pi_{n+1}=\left(\prod_{m=1}^n\alpha_m\right)\left(\frac{s_{n+1(P)}}{s_1(P)}\right)\, \text{ for each } \,n\in \bb N.$$ Let $x_n=\left(\prod_{m=1}^n\alpha_m\right)$ for every $n\in \bb N$ and observe that 
$$
\pi_{n+1}=x_n\left(\frac{s_{n+1(P)}}{s_1(P)}\right), 
$$
which yields 
$$
\lim_{n\to \infty}\pi_{n+1}=\frac{1}{s_1(P)}\lim_{n\to \infty}x_n \lim_{n\to \infty}s_{n+1}(P).
$$
This observation, together with the facts that $s_1(P)>0$ and $\lim_{n\to \infty}s_{n+1}(P)=s>0$ allows us to infer that $\lim_{n\to \infty}\pi_{n+1}>0$ if and only if $\lim_{n\to \infty}x_n>0$. But 
$$\lim_{n\to \infty}x_n =
\lim_{n\to \infty}\frac{1}{e^{\sum_{m=1}^n1/m^2}}
=\frac{1}{e^{\pi/6}}>0,
$$ and we conclude that $\lim_{n\to \infty}\pi_{n}>0$. This completes the proof.

\end{proof}

We are now in a position to prove a key result --- a characterization theorem for positive operators in $\{\cl N_{\Phi_\pi^*}(\cl H):\pi\in \widehat{\Pi}\}$ --- which answers the question we asked in the paragraph preceding the Example \ref{Intuitive-to-minicharacterization-for-positives}. Moreover, this result is a special case of a more general result that is presented in the next section (see Theorem \ref{complete-characterization-positives}).

\begin{thm}\label{mini-characterization-positives}
Let $P$ be a positive operator on $\cl{H}$. Then the following statements are equivalent.
\begin{enumerate}
\item $P\in \cl B_0(\cl H)$.
\item $P\in\cl{AN}_{\Phi_\pi^*}(\cl H)$ for every $\pi\in \widehat \Pi$.
\item $P\in\cl{N}_{\Phi_\pi^*}(\cl H)$ for every $\pi\in \widehat \Pi$.
\end{enumerate}
\end{thm}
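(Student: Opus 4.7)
The plan is to prove the theorem by establishing the cyclic chain of implications $(1)\Rightarrow(2)\Rightarrow(3)\Rightarrow(1)$, each of which is essentially a direct consequence of results already stated in this section of the paper. Almost no new computation is required; the work has been done in the preparatory propositions and examples.

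For $(1)\Rightarrow(2)$: I would invoke Theorem \ref{compacts-are-absolutelynorming}, which states that whenever $\Phi$ is an s.n.\ function equivalent to the maximal s.n.\ function $\Phi_1$, every compact operator lies in $\cl{AN}_{\Phi^*}(\cl H)$. By construction, each $\Phi_\pi$ with $\pi \in \widehat{\Pi}$ is equivalent to $\Phi_1$ (as noted in the paragraph defining the family $\{\Phi_\pi:\pi\in\widehat\Pi\}$), so applying the theorem to $\Phi=\Phi_\pi$ for an arbitrary $\pi\in\widehat\Pi$ gives $P\in\cl{AN}_{\Phi_\pi^*}(\cl H)$ for every $\pi\in\widehat\Pi$. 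The implication $(2)\Rightarrow(3)$ is immediate from Definition \ref{Phi-and-abs-phi-Norming}, which explicitly records the inclusion $\cl{AN}_{\Phi^*}(\cl H)\subseteq \cl N_{\Phi^*}(\cl H)$; thus $P\in\cl{AN}_{\Phi_\pi^*}(\cl H)$ for all $\pi\in\widehat\Pi$ yields $P\in\cl N_{\Phi_\pi^*}(\cl H)$ for all $\pi\in\widehat\Pi$.

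The nontrivial implication is $(3)\Rightarrow(1)$, which I would argue by contraposition and reduce entirely to Theorem \ref{normings-are-compacts}. Assume $P$ is a positive operator that is not compact, so that $\lim_{j\to\infty}s_j(P)\neq 0$. Theorem \ref{normings-are-compacts} then produces a sequence $\pi\in\widehat\Pi$ such that $P\notin\cl N_{\Phi_\pi^*}(\cl H)$, which directly contradicts (3). Hence (3) forces $P$ to be compact, completing the cycle. The main ``obstacle'' in this proof is conceptual rather than technical: it lies in recognizing that Theorem \ref{normings-are-compacts} is exactly the contrapositive content needed, and that the definitional chain from Proposition \ref{PhiNorming-alternative-II} through Definition \ref{Phi-and-abs-phi-Norming} already encodes the passage between $\Phi_\pi^*$-norming and norm attainment by diagonal trace-class operators. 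Once these pieces are in place, the characterization theorem is obtained simply by assembling them.
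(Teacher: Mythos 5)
Your proposal is correct and follows exactly the paper's own argument: $(1)\Rightarrow(2)$ via Theorem \ref{compacts-are-absolutelynorming}, $(2)\Rightarrow(3)$ from the definitional inclusion $\cl{AN}_{\Phi^*}(\cl H)\subseteq\cl N_{\Phi^*}(\cl H)$, and $(3)\Rightarrow(1)$ by contraposition using Theorem \ref{normings-are-compacts}. No issues.
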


\begin{proof}
(1) implies (2) follows from Theorem \ref{compacts-are-absolutely-symmetric-norming}. (2) implies (3) is obvious. (3) implies (1) is a direct consequence of the Theorem \ref{universally-symmetric-normings-are-compacts}.
\end{proof}

We conclude this section by proving the following result that extends the above theorem to bounded operators in $\cl B(\cl H)$, the above theorem required the operator to be positive. This is the main theorem of this section.
\begin{thm}\label{mini-characterization-arbitrary}
If $T\in \cl B(\cl H)$, then the following statements are equivalent.
\begin{enumerate}
\item $T\in \cl B_0(\cl H)$.
\item $T\in\cl{AN}_{\Phi_\pi^*}(\cl H)$ for every $\pi\in \widehat \Pi$.
\item $T\in\cl{N}_{\Phi_\pi^*}(\cl H)$ for every $\pi\in \widehat \Pi$.
\end{enumerate}
\end{thm}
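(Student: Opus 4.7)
The plan is to reduce the general statement to the positive operator version already handled by Theorem \ref{mini-characterization-positives}, using Proposition \ref{T-iff-|T|-phi-star} as the bridge.

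\medskip

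\noindent\textbf{Outline.} First, (1)$\Rightarrow$(2): every $\Phi_\pi$ with $\pi \in \widehat \Pi$ is equivalent to the maximal s.n. function $\Phi_1$ (this is observed in the paragraph introducing $\widehat \Pi$), so Theorem \ref{compacts-are-absolutelynorming} applies and gives $\cl B_0(\cl H) \subseteq \cl{AN}_{\Phi_\pi^*}(\cl H)$ for each such $\pi$. Second, (2)$\Rightarrow$(3): this is immediate from the last line of Definition \ref{Phi-and-abs-phi-Norming}, i.e.\ $\cl{AN}_{\Phi_\pi^*}(\cl H) \subseteq \cl N_{\Phi_\pi^*}(\cl H)$.

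\medskip

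\noindent\textbf{The content.} The only nontrivial implication is (3)$\Rightarrow$(1), and here I would simply invoke Proposition \ref{T-iff-|T|-phi-star}: since $\Phi_\pi$ is equivalent to the maximal s.n. function, the proposition applies and yields
\[
T \in \cl N_{\Phi_\pi^*}(\cl H) \iff |T| \in \cl N_{\Phi_\pi^*}(\cl H).
\]
Assuming (3), this gives $|T| \in \cl N_{\Phi_\pi^*}(\cl H)$ for every $\pi \in \widehat \Pi$. Since $|T|$ is positive, Theorem \ref{mini-characterization-positives} applies to $|T|$ and forces $|T| \in \cl B_0(\cl H)$. Finally, $T$ is compact if and only if $|T|$ is compact (equivalently, $s_j(T) = s_j(|T|) \to 0$ as $j \to \infty$), so $T \in \cl B_0(\cl H)$, establishing (1).

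\medskip

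\noindent\textbf{Expected obstacle.} There is essentially no new obstacle at this stage: all of the delicate construction (choosing the right weight $\pi$ so that $\pi_n/\pi_{n+1} > s_n(P)/s_{n+1}(P)$ and the limit stays positive) was absorbed into Theorem \ref{normings-are-compacts} and hence into Theorem \ref{mini-characterization-positives}. The one small point worth checking carefully is that Proposition \ref{T-iff-|T|-phi-star} is genuinely applicable to each $\Phi_\pi$ in the family, i.e.\ that $\Phi_\pi$ is equivalent to $\Phi_1$; this follows because $\pi_1 = 1$ and $\lim_n \pi_n > 0$ together imply $c \leq \pi_n \leq 1$ for some $c > 0$, so $c \Phi_1 \leq \Phi_\pi \leq \Phi_1$ on $c_{00}^*$. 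After that, the theorem is a two-line deduction from previously established results.
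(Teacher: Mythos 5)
Your proposal is correct and follows essentially the same route as the paper: the paper also disposes of (1)$\Rightarrow$(2) via Theorem \ref{compacts-are-absolutelynorming}, notes (2)$\Rightarrow$(3) is trivial, and proves (3)$\Rightarrow$(1) by combining Proposition \ref{T-iff-|T|-phi-star} with the positive case (Theorem \ref{normings-are-compacts}, which underlies Theorem \ref{mini-characterization-positives}). Your extra check that each $\Phi_\pi$ is equivalent to $\Phi_1$ is a sound and worthwhile verification, but it introduces no deviation from the paper's argument.
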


\begin{proof}
(2) implies (3) is obvious, as is (1) implies (2) from the Theorem \ref{compacts-are-absolutely-symmetric-norming}. The Proposition \ref{T-iff-|T|-phi-star} along with the Theorem \ref{universally-symmetric-normings-are-compacts} proves (3) implies (1).
\end{proof}

The above result, although very important, is transitory. We will see a much more general result than this --- the characterization theorem for universally symmetric norming operators (see Theorem \ref{complete-characterization-arbitrary}).
\section{Universally symmetric norming operators and their characterization}\label{Characterization-Universally-Symmetrically-Norming}

In the preceding section we considered a certain family $\{\Phi_\pi:\pi\in \widehat{\Pi}\}$ of s.n.functions and a family of symmetric norms on $\cl B(\cl H)$ generated by the dual of these, and we studied the symmetric norming operators and absolutely symmetric norming operators with respect to each of these symmetric norms. The fact that each member of the family $\{\Phi_\pi:\pi\in \widehat{\Pi}\}$ is equivalent to the maximal s.n.function $\Phi_1$ suggests the possibility of extending the Theorem \ref{mini-characterization-arbitrary} to a larger family of s.n.functions. With this in mind, our attention is drawn to the family of all s.n.functions that are equivalent to the maximal s.n.function, that is, the family $\{\Phi:\Phi\text{ is equivalent to } \Phi_1\}$  of s.n.functions. This larger family of s.n.functions provides us with the leading idea on which we develop the notions of ``universally symmetric norming operators'' and ``universally absolutely symmetric norming operators'' on a separable Hilbert space. The study of these operators are taken up in this section. Our main result is Theorem \ref{complete-characterization-arbitrary} which, in effect, states that an operator in $\cl B(\cl H)$ is universally symmetric norming if and only if it is universally absolutely symmetric norming, which holds if and only if it is compact.

We begin by defining the relevant classes of operators.

\begin{defn}\label{Universally-symmetric-Norming}
An operator $T\in \cl B(\cl H)$ is said to be \emph{universally symmetric norming} if
$T\in \cl N_{\Phi^*}(\cl H)$ for every s.n.function $\Phi$ equivalent to the maximal s.n.function $\Phi_1$.
Alternatively, an operator $T\in (\cl B(\cl H))$ is said to be universally symmetric norming if 
$T\in \cl N_{\Phi^*}(\cl H)$ for every $\Phi$ from the family $\{\Phi:\Phi\text{ is equivalent to } \Phi_1\}$ of s.n.functions.
\end{defn}

\begin{defn}\label{Universally-absolutely-symmetric-Norming}
An operator $T\in \cl B(\cl H)$ is said to be \emph{universally absolutely symmetric norming} if
$T\in \cl{AN}_{\Phi^*}(\cl H)$ for every s.n.function $\Phi$ equivalent to the maximal s.n.function $\Phi_1$.
\end{defn}

\begin{remark}
Since every symmetric norm on $\cl B(\cl H)$  is topologically equivalent to the usual operator norm, it follows that $T\in \cl B(\cl H)$ is universally symmetric norming (respectively universally absolutely symmetric norming) if and only if $T$ is symmetric norming (respectively absolutely symmetric norming) with respect to every symmetric norm on $\cl B(\cl H)$. Another important observation worth mentioning here is that every universally absolutely symmetric norming operator is universally symmetric norming.
\end{remark}
The following theorem gives a useful characterization of positive universally symmetric norming operators in $\cl B(\cl H)$.

\begin{thm}\label{complete-characterization-positives}
Let $P$ be a positive operator on $\cl{H}$ and let $\Phi_1$ denote the maximal s.n.function. Then the following statements are equivalent.
\begin{enumerate}
\item $P\in \cl B_0(\cl H)$.
\item $P$ is universally absolutely symmetric norming, that is,
 $P\in\cl{AN}_{\Phi^*}(\cl H)$ for every s.n.function $\Phi$ equivalent to $\Phi_1$.
\item $P$ is universally symmetric norming, that is, $P\in\cl{N}_{\Phi^*}(\cl H)$ for every s.n.function $\Phi$ equivalent to $\Phi_1$.
\end{enumerate}
\end{thm}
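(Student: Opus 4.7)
The plan is to reduce this theorem entirely to results already established in Section 3, exploiting the fact that the family $\{\Phi_\pi : \pi \in \widehat \Pi\}$ is contained in the larger family $\{\Phi : \Phi \text{ is equivalent to } \Phi_1\}$. I will prove the implications in the cyclic order $(1)\Rightarrow(2)\Rightarrow(3)\Rightarrow(1)$.

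For $(1)\Rightarrow(2)$, I would simply invoke Theorem \ref{compacts-are-absolutelynorming}: if $P$ is compact and $\Phi$ is any s.n.\ function equivalent to $\Phi_1$, then $P \in \cl{AN}_{\Phi^*}(\cl H)$. Since this holds for every such $\Phi$, $P$ is universally absolutely symmetrically norming by Definition \ref{Universally-absolutely-symmetrically-Norming}.

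The implication $(2)\Rightarrow(3)$ is immediate from the trivial inclusion $\cl{AN}_{\Phi^*}(\cl H) \subseteq \cl N_{\Phi^*}(\cl H)$ noted at the end of Definition \ref{Phi-and-abs-phi-Norming}, applied uniformly over all $\Phi$ equivalent to $\Phi_1$.

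The implication $(3)\Rightarrow(1)$ is where the work of Section 3 pays off. If $P$ is universally symmetrically norming, then $P \in \cl N_{\Phi^*}(\cl H)$ for every s.n.\ function $\Phi$ equivalent to the maximal s.n.\ function $\Phi_1$. Each $\Phi_\pi$ with $\pi \in \widehat{\Pi}$ is, as noted in the paragraph following Theorem \ref{Identity-nonnorming}, equivalent to $\Phi_1$; consequently $\{\Phi_\pi : \pi \in \widehat \Pi\} \subseteq \{\Phi : \Phi \text{ is equivalent to } \Phi_1\}$. Hence $P \in \cl N_{\Phi_\pi^*}(\cl H)$ for every $\pi \in \widehat \Pi$, i.e.\ condition (3) of Theorem \ref{mini-characterization-positives} holds for $P$. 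Since $P$ is positive, Theorem \ref{mini-characterization-positives} then gives $P \in \cl B_0(\cl H)$, which is (1).

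The proof faces no substantive obstacle: the heavy lifting was done in the previous section via the construction in Theorem \ref{normings-are-compacts}, which produced, for any positive noncompact $P$, a single $\pi \in \widehat \Pi$ witnessing the failure of $\Phi_\pi^*$-norming. The only conceptual point to highlight is that the universal hypothesis in (3) is strictly stronger than its counterpart in Theorem \ref{mini-characterization-positives}, so no new construction is required --- the smaller family already suffices to detect noncompactness.
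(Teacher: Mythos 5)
Your proposal is correct and follows the paper's own argument exactly: $(1)\Rightarrow(2)$ via Theorem \ref{compacts-are-absolutelynorming}, $(2)\Rightarrow(3)$ trivially, and $(3)\Rightarrow(1)$ by restricting the universal hypothesis to the subfamily $\{\Phi_\pi:\pi\in\widehat\Pi\}$ and invoking Theorem \ref{mini-characterization-positives}. Your write-up merely makes explicit the (correct) observation that each $\Phi_\pi$ is equivalent to $\Phi_1$, which the paper leaves implicit.
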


\begin{proof}
The implication $(1)\implies (2)$ is an immediate consuequence of Theorem \ref{compacts-are-absolutely-symmetric-norming} and $(2)\implies (3)$ is straightforward. To prove $(3)\implies (1)$, assume that the positive operator $P$ is universally symmetric norming on $\cl H$. Then  the statement $(3)$ of Theorem \ref{mini-characterization-positives} holds which implies that $P$ is compact and the proof is complete.
\end{proof}

We next establish the following result which allows us to extend the above theorem to operators that are not necessarily positive.

\begin{prop}\label{T-iff-|T|-universal}
An operator $T\in \cl B(\cl H)$ is universally symmetric norming if and only if $|T|$ is universally symmetric norming.
\end{prop}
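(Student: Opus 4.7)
The plan is to reduce the statement to a pointwise (per-symmetric-norm) application of Proposition \ref{T-iff-|T|-phi-star}. The definition of universal symmetric norming (Definition \ref{Universally-symmetrically-Norming}) simply quantifies membership in $\cl N_{\Phi^*}(\cl H)$ over the family of all s.n.\ functions $\Phi$ equivalent to $\Phi_1$. So once we observe the equivalence $T\in \cl N_{\Phi^*}(\cl H) \iff |T|\in \cl N_{\Phi^*}(\cl H)$ holds separately for each such $\Phi$, we immediately obtain the ``universal'' version by taking the conjunction over all $\Phi$ in the family.

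Concretely, first I would fix an arbitrary s.n.\ function $\Phi$ equivalent to $\Phi_1$. Proposition \ref{T-iff-|T|-phi-star} applies to this $\Phi$ (its hypothesis is precisely equivalence to the maximal s.n.\ function), giving $T\in \cl N_{\Phi^*}(\cl H)$ iff $|T|\in \cl N_{\Phi^*}(\cl H)$. Assuming $T$ is universally symmetrically norming, the left side holds for every such $\Phi$, so the right side holds for every such $\Phi$, i.e.\ $|T|$ is universally symmetrically norming. The converse direction is symmetric: assuming $|T|$ is universally symmetrically norming, Proposition \ref{T-iff-|T|-phi-star} again gives $T\in \cl N_{\Phi^*}(\cl H)$ for each $\Phi$ equivalent to $\Phi_1$.

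There is no real obstacle here --- the whole point of having proven Proposition \ref{T-iff-|T|-phi-star} in the previous section is to make this universal version a one-line consequence. The only thing worth flagging is that the polar-decomposition trick used inside the proof of Proposition \ref{T-iff-|T|-phi-star} (replacing a norming $K$ by $KU$ or $\hat K U^\ast$) does not depend on the particular $\Phi$; the same $U$ works uniformly. Hence no quantifier issue arises when we let $\Phi$ vary.
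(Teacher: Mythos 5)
Your proposal is correct and matches the paper's proof exactly: the paper likewise derives this as an immediate consequence of Proposition \ref{T-iff-|T|-phi-star}, applied to each s.n.\ function $\Phi$ equivalent to $\Phi_1$ and then quantified over the whole family. Your extra remark that the polar-decomposition argument is uniform in $\Phi$ is a fine clarification but not needed, since the per-$\Phi$ equivalence is all that is used.
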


\begin{proof}
This follows immediately from the Proposition \ref{T-iff-|T|-phi-star}.
\end{proof}

We are now prepared to extend the Theorem \ref{complete-characterization-positives} for an arbitrary operator on a separable Hilbert space. 

\begin{thm}\label{complete-characterization-arbitrary}
Let $T\in \cl B(\cl{H})$ and let $\Phi_1$ denote the maximal s.n.function. Then the following statements are equivalent.
\begin{enumerate}
\item $T\in \cl B_0(\cl H)$.
\item $T$ is universally absolutely symmetric norming, that is,
 $T\in\cl{AN}_{\Phi^*}(\cl H)$ for every s.n.function $\Phi$ equivalent to $\Phi_1$.
\item $T$ is universally symmetric norming, that is, $T\in\cl{N}_{\Phi^*}(\cl H)$ for every s.n.function $\Phi$ equivalent to $\Phi_1$.
\end{enumerate}
\end{thm}

\begin{proof}
Theorem \ref{complete-characterization-positives} and the preceding proposition yield this result.
\end{proof}

\begin{remark} The preceding result provides an alternative characterization of compact operators on $\cl H$.
\end{remark}

It is worth noticing that Theorem \ref{universally-symmetric-normings-are-compacts} essentially states that given any positive noncompact operator on (an infinite-dimensional separable) Hilbert space $\cl H$, there exists a symmetric norm on $\bh$ with respect to which the operator does not attain its norm. The following corollary extends Theorem \ref{universally-symmetric-normings-are-compacts} to any noncompact operator. 

\begin{cor}
If $T\in \cl B(\cl H)$ is a noncompact operator then there exists $\pi\in \widehat \Pi$ such that $T\notin \cl N_{\Phi_\pi^*}(\cl H)$
\end{cor}

\begin{proof}
Contrapositively, if for every $\pi\in \widehat \Pi$ the operator $T\in \cl N_{\Phi_\pi^*}(\cl H)$, then by the preceding theorem $T$ must be a compact operator.
\end{proof}

\begin{bibdiv}
\begin{biblist}

\bib{AcAgPa}{article}{
      author={Acosta, Mar\'{\i}a~D.},
      author={Aguirre, Francisco~J.},
      author={Pay\'{a}, Rafael},
       title={A new sufficient condition for the denseness of norm attaining
  operators},
        date={1996},
        ISSN={0035-7596},
     journal={Rocky Mountain J. Math.},
      volume={26},
      number={2},
       pages={407\ndash 418},
         url={https://doi.org/10.1216/rmjm/1181072066},
      review={\MR{1406488}},
}

\bib{Acosta}{article}{
      author={Acosta, Mar\'{\i}a~D.},
       title={Denseness of norm-attaining operators into strictly convex
  spaces},
        date={1999},
        ISSN={0308-2105},
     journal={Proc. Roy. Soc. Edinburgh Sect. A},
      volume={129},
      number={6},
       pages={1107\ndash 1114},
         url={https://doi.org/10.1017/S0308210500019296},
      review={\MR{1728538}},
}

\bib{Aguirre}{article}{
      author={Aguirre, Francisco~J.},
       title={Norm-attaining operators into strictly convex {B}anach spaces},
        date={1998},
        ISSN={0022-247X},
     journal={J. Math. Anal. Appl.},
      volume={222},
      number={2},
       pages={431\ndash 437},
         url={https://doi.org/10.1006/jmaa.1998.5913},
      review={\MR{1628476}},
}

\bib{AcRu02}{article}{
      author={Acosta, Mar\'{\i}a~D.},
      author={Ruiz, C\'{e}sar},
       title={Norm attaining operators on some classical {B}anach spaces},
        date={2002},
        ISSN={0025-584X},
     journal={Math. Nachr.},
      volume={235},
       pages={17\ndash 27},
  url={https://doi.org/10.1002/1522-2616(200202)235:1<17::AID-MANA17>3.3.CO;2-B},
      review={\MR{1889275}},
}

\bib{AcRu98}{incollection}{
      author={Acosta, Mar\'{\i}a~D.},
      author={Ruiz~Gal\'{a}n, Manuel},
       title={Norm attaining operators and reflexivity},
        date={1998},
       pages={171\ndash 177},
        note={International Workshop on Operator Theory (Cefal\`u, 1997)},
      review={\MR{1710834}},
}

\bib{CN}{article}{
      author={Carvajal, Xavier},
      author={Neves, Wladimir},
       title={Operators that achieve the norm},
        date={2012},
        ISSN={0378-620X},
     journal={Integral Equations Operator Theory},
      volume={72},
      number={2},
       pages={179\ndash 195},
         url={https://doi.org/10.1007/s00020-011-1923-y},
      review={\MR{2872473}},
}

\bib{GK}{book}{
      author={Gohberg, Israel~C.},
      author={Kre\u{\i}n, Mark~G.},
       title={Introduction to the theory of linear nonselfadjoint operators},
      series={Translated from the Russian by A. Feinstein. Translations of
  Mathematical Monographs, Vol. 18},
   publisher={American Mathematical Society, Providence, R.I.},
        date={1969},
      review={\MR{0246142}},
}

\bib{SP}{article}{
      author={Pandey, Satish~K.},
       title={A spectral characterization of absolutely norming operators on
  s.n.ideals},
        date={2017},
        ISSN={1846-3886},
     journal={Oper. Matrices},
      volume={11},
      number={3},
       pages={845\ndash 873},
         url={https://doi.org/10.7153/oam-11-60},
      review={\MR{3655689}},
}

\bib{Partington}{article}{
      author={Partington, Jonathan~R.},
       title={Norm attaining operators},
        date={1982},
        ISSN={0021-2172},
     journal={Israel J. Math.},
      volume={43},
      number={3},
       pages={273\ndash 276},
         url={https://doi.org/10.1007/BF02761947},
      review={\MR{689984}},
}

\bib{VpSp}{article}{
      author={Pandey, Satish~K.},
      author={Paulsen, Vern~I.},
       title={A spectral characterization of {$\mathcal{AN}$} operators},
        date={2017},
        ISSN={1446-7887},
     journal={J. Aust. Math. Soc.},
      volume={102},
      number={3},
       pages={369\ndash 391},
         url={https://doi.org/10.1017/S1446788716000239},
      review={\MR{3650963}},
}

\bib{Scha2}{article}{
      author={Schachermayer, Walter},
       title={Norm attaining operators and renormings of {B}anach spaces},
        date={1983},
        ISSN={0021-2172},
     journal={Israel J. Math.},
      volume={44},
      number={3},
       pages={201\ndash 212},
         url={https://doi.org/10.1007/BF02760971},
      review={\MR{693659}},
}

\bib{Scha}{article}{
      author={Schachermayer, Walter},
       title={Norm attaining operators on some classical {B}anach spaces},
        date={1983},
        ISSN={0030-8730},
     journal={Pacific J. Math.},
      volume={105},
      number={2},
       pages={427\ndash 438},
         url={http://projecteuclid.org/euclid.pjm/1102723338},
      review={\MR{691613}},
}

\bib{Shkarin}{article}{
      author={Shkarin, Stanislav},
       title={Norm attaining operators and pseudospectrum},
        date={2009},
        ISSN={0378-620X},
     journal={Integral Equations Operator Theory},
      volume={64},
      number={1},
       pages={115\ndash 136},
         url={https://doi.org/10.1007/s00020-009-1676-z},
      review={\MR{2501174}},
}

\end{biblist}
\end{bibdiv}

\end{document}